\newcommand\numberthis{\addtocounter{equation}{1}\tag{\theequation}}
\newcommand{\ignore}[1]{}
\newtheorem{proposition}{Proposition}
\newtheorem{theorem}{Theorem}
\newtheorem{remark}{Remark}
\newtheorem{lemma}{Lemma}
\newcommand{\R}{\mathbb{R}}
\newcommand{\Z}{\mathbb{Z}}
\newcommand{\F}{\mathcal{F'}}
\newcommand{\ES}{\mathcal{S}}
\def\Ra{{\rm Ra}}
\author{Antoine Choffrut, Camilla Nobili and Felix Otto}
\title{A maximal regularity estimate for the non-stationary Stokes equation in 
the strip}
\begin{document}

\maketitle

 \begin{abstract}
 In a $d-$dimensional strip with $d\geq 2$, we study the non-stationary Stokes
 equation with no-slip boundary condition in the lower and upper plates and 
 periodic boundary condition in the horizontal directions.
 In this paper we establish a new maximal regularity estimate in the real interpolation norm
 \begin{equation*}
       ||f||_{(0,1)}=\inf_{f=f_0+f_1}\left\{\left\langle\sup_{0<z<1} |f_0|\right\rangle+
       \left\langle\int_0^{1} |f_1| \frac{dz}{(1-z)z}\right\rangle\right\}\,,
      \end{equation*}
      where the brackets $\langle\cdot\rangle$ denotes the horizontal-space and time average.
  The norms involved in the definition of $\|\cdot\|_{(0,1)}$ are critical for two reasons: 
  the exponents are borderline for the Calder\'on-Zygmund theory and the weight $1/z$ 
  just fails to be Muckenhoupt.  
  Therefore, the estimate is only true under horizontal 
 bandedness condition, (i.\ e.\ a restriction to a packet of wave numbers in Fourier space).
 The motivation to express the maximal 
 regularity in such a norm comes from an application to the Rayleigh-B\'enard problem
 (see \cite{CNO-part1}).

\smallskip

\noindent \textbf{Keywords.} 
Non-stationary Stokes equations, no-slip boundary condition, maximal regularity, real interpolation .

\end{abstract}

\newpage
\tableofcontents

\newpage

 \section{Introduction}           
 In the $d-$dimensional strip $[0,L)^{d-1}\times[0,1]$,  $d\geq 2$,  we consider the non-stationary Stokes equation for the vector field $u(x',z,t)$ and the
  scalar field $p(x',z,t)$
 \begin{equation}\label{STOKES-STRIP}
	\left\{\begin{array}{rclc}
	 \partial_t u-\Delta u+\nabla p&=&f  \qquad & {\rm for } \quad 0<z<1 \,,\\
	  \nabla\cdot u&=& 0                 \qquad & {\rm for } \quad 0<z<1\,,\\
	   u &=& 0                           \qquad & {\rm for } \quad z\in\{0,1\}\,,\\
	   u &=& 0                           \qquad & {\rm for } \quad t=0\,,\\
         \end{array}\right.        
  \end{equation}	
  where $x'\in[0,L)^{d-1}$ and $z\in [0,1]$ indicate the spatial variables and $t\in \R^+$ denotes the time variable.
  In what follows it is important to distinguish the horizontal component $u'\in \R^{d-1}$
  and the vertical component $u^z\in \R$ of the vector field $u$.
  
  Motivated by an application to the Rayleigh-B\'enard convection problem (see \cite{CNO-part1}), in this paper we establish the following maximal regularity estimate :

  \begin{theorem}[Maximal regularity in the strip]\label{th1}\ \\ 
      There exists $R_0\in(0,\infty)$ depending only on $d$ and $L$ such that the following holds.     
      Let $u,p,f$ satisfy  the equation (\ref{STOKES-STRIP}).
      Assume $f$ is horizontally  band-limited , i.e
      \begin{equation}\label{BANDLIM}
      \F f(k',z,t)=0 \mbox{ unless } 1\leq R|k'|\leq 4  \mbox{ where } R<R_0.
      \end{equation}
      Then,
      \begin{equation}\label{MRE}
      ||(\partial_t -\partial_z^2)u'||_{(0,1)}+||\nabla'\nabla u'||_{(0,1)}+||\partial_t u^z||_{(0,1)}+||\nabla^2 u^z||_{(0,1)}+
      ||\nabla p||_{(0,1)}\lesssim||f||_{(0,1)},
      \end{equation}
      where $||\cdot||_{(0,1)}$ denotes the norm
      \begin{equation}\label{NORM-STRIP}
       ||f||_{(0,1)}:=||f||_{(R,(0,1))}=\inf_{f=f_0+f_1}\left\{\left\langle\sup_{0<z<1} |f_0|\right\rangle+
       \left\langle\int_0^{1} |f_1| \frac{dz}{(1-z)z}\right\rangle\right\}\,,
      \end{equation}
      where $f_0$ and $f_1$ satisfy the bandedness assumption (\ref{BANDLIM})\,.
  \end{theorem}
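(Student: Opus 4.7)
The plan is to exploit the real-interpolation structure of the norm $\|\cdot\|_{(0,1)}$. Given any admissible decomposition $f = f_0 + f_1$ (each summand horizontally band-limited as in (\ref{BANDLIM})), I solve the Stokes system (\ref{STOKES-STRIP}) separately with right-hand sides $f_0$ and $f_1$, obtaining $(u_0,p_0)$ and $(u_1,p_1)$. By linearity, $u=u_0+u_1$ and $p=p_0+p_1$, so the left-hand side of (\ref{MRE}) splits accordingly. The theorem then reduces, after taking the infimum over decompositions, to the two endpoint estimates
\begin{equation*}
\text{(E1)}\quad \Big\langle \sup_{0<z<1} |T u_0|\Big\rangle \lesssim \Big\langle \sup_{0<z<1} |f_0|\Big\rangle,
\end{equation*}
\begin{equation*}
\text{(E2)}\quad \Big\langle \int_0^1 |T u_1|\, \frac{dz}{(1-z)z}\Big\rangle \lesssim \Big\langle \int_0^1 |f_1|\, \frac{dz}{(1-z)z}\Big\rangle,
\end{equation*}
where $T$ denotes each of the operators appearing on the left of (\ref{MRE}).

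For both (E1) and (E2) I would take the Fourier transform in the periodic horizontal variables $x'$ and in time $t$. For every $(k',\tau)$ the Stokes system reduces to a constant-coefficient ODE in $z$ for the Fourier transform $\hat u(k',z,\tau)$, with homogeneous Dirichlet data at $z\in\{0,1\}$; the pressure can be eliminated using the divergence-free condition. The bandedness (\ref{BANDLIM}) restricts us to $|k'|\sim 1/R$, so the associated vertical symbol $\sqrt{i\tau + |k'|^2}$ has real part bounded below by a multiple of $|k'|\gtrsim 1/R$. This means the explicit Green's function $G(z,z';k',\tau)$ for the BVP enjoys genuine exponential decay in $|z-z'|$ at rate $\gtrsim 1/R$, and it vanishes like $z(1-z)$ and $z'(1-z')$ near the plates. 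These two features, not the algebraic decay of a general CZ kernel, are the crucial analytic input.

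The estimates (E1) and (E2) should then follow from translating those pointwise kernel bounds into the required bounds on the horizontal/time convolution. For (E1) the exponential decay gives $\sup_z\int |\partial^\alpha G|\, dz' < \infty$ uniformly in $(k',\tau)$, which by a Young/Minkowski argument yields $\sup_z \|\partial^\alpha \hat u_0(\cdot,z,\cdot)\|_{L^1_{x',t}}\lesssim \|\sup_z|f_0|\|_{L^1_{x',t}}$. For (E2) the key is to combine the vanishing of $G$ at $z=0,1$ (inherited from the Dirichlet conditions) with the exponential decay to show that the weighted integral operator with kernel $\frac{1}{z(1-z)}|G(z,z';k',\tau)|\, z'(1-z')$ is bounded on $L^1(dz'/(z'(1-z')))$; this rewrites the weighted $L^1$ estimate as one with a tame, symmetric kernel.

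The main obstacle will be (E2): the weight $1/(z(1-z))$ just fails to be Muckenhoupt and $T$ is borderline for Calderón--Zygmund theory, so no general machinery applies, and one must gain precisely the factor $z'(1-z')$ from the Green's function to cancel the singular weight at $z'=0,1$, while not losing a logarithm at $z=0,1$. The smallness condition $R<R_0$ is almost certainly used here: it either gives a small parameter to absorb a boundary-to-boundary coupling of the two plates perturbatively, or ensures that the exponential decay $e^{-|z-z'|/R}$ is sharp enough on the scale of the unit strip to dominate the singular weight. Once (E2) is in hand, (E1) will be comparatively easier, and the theorem follows by the decomposition argument above.
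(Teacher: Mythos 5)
Your high-level decomposition $f=f_0+f_1$ and reduction to two endpoint estimates (E1), (E2) is sound, and you have correctly identified the two analytic ingredients that make the bandedness hypothesis do work: exponential decay of the vertical kernel at rate $\gtrsim 1/R$, and boundary vanishing of the Dirichlet Green's function. However, the core technical step — taking the Fourier transform in $x'$ and $t$ and then reasoning from pointwise symbol/kernel bounds at fixed $(k',\tau)$ — does not produce the estimates you need. Both endpoint norms are $L^1$-type in $x'$ and $t$, and a uniform bound on $\widehat{T}(k',\tau)$ (or even on $\int|\partial^\alpha G(z,z';k',\tau)|\,dz'$ per Fourier mode) only yields $L^2_{x',t}$ control by Plancherel; it does not control $\langle\cdot\rangle$, which is $L^1$ in $x'$ (and a long-time average in $t$). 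Passing between $L^1$ and $L^2$ via Bernstein introduces $R$-dependent factors that would destroy the $R$-uniformity of the theorem (apart from the smallness $R<R_0$). To get genuine $L^1_{x',t}$-type bounds one must stay in physical variables: the paper represents the solution operators via Poisson/heat kernels and Duhamel's formula, proves $L^1_{x'}$ kernel bounds such as $\langle|\nabla'\Gamma_{d-1}(\cdot,t)|\rangle'\lesssim t^{-1/2}$ (see (\ref{z0})), and only uses bandedness to trade one horizontal derivative for a factor $R^{\pm1}$ (Lemma \ref{Bandedness}). This is a physical-space substitute for the Fourier symbol estimates you had in mind.

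There is also a gap in the treatment of the strip itself. Working directly with the Green's function of the Stokes system in $\{0<z<1\}$ with Dirichlet data on both plates is appreciably harder than the half-space problem: the Stokes Green's tensor does not factor cleanly, and its second derivatives do not inherit the $z(1-z)z'(1-z')$ vanishing you would need to cancel the weight $1/(z(1-z))$ (near the diagonal $\partial_z^2 G$ is singular, not small). The paper sidesteps this by first localizing with a cut-off $\eta$ to two overlapping half-space problems (\ref{UHS}), at the cost of lower-order source terms and a nonzero divergence $\tilde\rho$, all of which are later reabsorbed using the bandedness-induced smallness; and then in each half-space it decomposes the Stokes solution operator into a fourfold composition of elementary operators (backward fractional diffusion, Dirichlet heat, forward fractional diffusion, Dirichlet heat — equations (\ref{FraBack})--(\ref{Heat2})) so that the delicate weighted $L^1$ estimates with boundary cancellation (Lemma \ref{Lemma5} and the arguments for (\ref{A1.1})--(\ref{A3.1})) can be carried out for each building block separately. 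Your proposal does not supply either of these structural reductions, and as stated the endpoint estimate (E2) would remain out of reach without them.
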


  In the Theorem above, $\F$ denotes the horizontal Fourier transform, $k'$ the conjugate variable of $x'$
  and the brackets $\langle\cdot\rangle$ stand for long-time and horizontal-space average. See Section \ref{notations} for notations.
  
  The Theorem as stated above is used in this form in \cite{CNO-part1}. Alternatively, the theorem can 
  be stated with the brackets $\langle\cdot\rangle$ denoting the integration in $t>0$, see Remark \ref{TIME}
  at the beginning of Section \ref{tec}.
 The maximal regularity in the strip is expressed in terms of the  \textit{interpolation} between the norms of
 $L^1\left(dtdx'\frac{1}{z(1-z)}dz\right)$ and  $L^\infty_z(L^1_{t,x'})$, which are both borderline
 for the Calder\'on-Zygmund estimates. We notice that the norm of $L^1\left(dtdx'\frac{1}{z(1-z)}dz\right)$ 
 is critical both because of the exponent and the weight $\frac{1}{z(1-z)}$ are borderline,
 therefore, estimate (\ref{MRE}) is only true under \textit{bandedness assumptions}
  (i.\ e.\ a restriction to a packet of wave numbers in Fourier space).
  We observe that only bandedness in the {\it horizontal} variable $x'$ is assumed and 
 this is extremely convenient since the horizontal Fourier transform (or rather, series), 
 with help of which bandedness is expressed, is compatible with the lateral periodic boundary conditions.
 
 We notice that in the maximal regularity theory the no-slip boundary condition
 is a nuisance : As opposed to the no-stress boundary condition in the half space, the no-slip boundary condition
 does not allow for an extension by reflection to the whole space, and thereby the use of simple kernels 
 or Fourier methods also in the normal variable. 
 The difficulty coming from the the no-slip boundary condition in the non-stationary Stokes equations when 
 deriving maximal regularity estimates is of course well-known; many techniques have been developed to derive
 Calder\'on-Zygmund estimates despite this difficulty.
 In the {\it{half space}} Solonnikov in \cite{solonnikov1964} has constructed a solution formula 
 for (\ref{STOKES-STRIP}) with zero initial data via the Oseen an Green tensors. 
 An easier and more compact representation of the solution to the problem (\ref{STOKES-STRIP}) with zero forcing term and non-zero initial value was 
 later given by Ukai in \cite{Ukai} by using a different method. Indeed he could  write an explicit formula
 of the solution operator as a composition of Riesz' operators and solutions operator for the heat and Laplace's
 equation. This formula is an effective tool to get $L^p-L^q$ ($1<q,p<\infty$) estimates for the solution and its derivatives.
 In the case of {\it exterior domains}, Maremonti and Solonnikov \cite{Mar-Sol} derive
 $L^p-L^q$ ($1<q,p<\infty$) estimates for (\ref{STOKES-STRIP}), going through estimates 
 for the extended solution in the half space and in the whole space.
 In particular in the half space they propose a decomposition of (\ref{STOKES-STRIP})
 with non-zero divergence equation.
 The book of Galdi \cite{galdi} provides with a complete treatment of the classical theory and
 results on the non-stationary Stokes equations and Navier-Stokes equations.

  In \cite{CNO-part1} the authors make substantial use of the estimate (\ref{MRE}) in Theorem \ref{th1} to get bounds on the 
the Nusselt number, which is the natural measure of the enhancement of upward heat flux for the Rayleigh-B\'enard convection. There, the quantity of interest 
 is the second vertical derivative $\partial_z^2$ of the vertical velocity component $u^z=u\cdot e_z$.
 The motivation for expressing the maximal regularity in the {\it borderline} spaces
 $L^1\left(dtdx'\frac{1}{z(1-z)}dz\right)$ and $L^\infty_z(L^1_{t,x'})$ comes from the nature 
 of the right-hand-side $f=\Ra Te_z-\frac{1}{Pr}(u\cdot \nabla) u$ in the problem studied in \cite{CNO-part1}. Indeed, thanks to the no-slip boundary conditions,
 the convective nonlinearity is well controlled in the $L^1\left(dtdx'\frac{1}{z(1-z)}dz\right)$-norm,
  hence, a maximal regularity theory for the non-stationary Stokes equations with respect to this norm
 is required. 
 The $L^\infty_z(L^1_{t,x'})-$ norm arises for two unrelated reasons: It is needed to estimate the buoyancy term $Te_z$ 
driving the Navier-Stokes equations and it is the natural partner of $L^1\left(dtdx'\frac{1}{z(1-z)}dz\right)$
in the maximal regularity estimate.

 Aside from their application to the Rayleigh B\'enard convection
 all the estimates in Theorem \ref{th1} might have 
 an independent interest since they
 show the full extent of what one can 
 obtain under the horizontal bandedness assumption only.

\section{Maximal regularity in the strip}

\subsection{From the strip to the half space}
Let us consider the non-stationary Stokes equations
\begin{equation*}
  \left\{\begin{array}{rclc}
     \partial_t u-\Delta u+\nabla p &=& f \qquad & {\rm for } \quad 0<z<1 \,,\\
        \nabla\cdot u &=& 0 \qquad & {\rm for }  \quad 0<z<1\,,\\
         u &=& 0  \qquad & {\rm for } \quad z\in\{0,1\} \,,\\
         u &=& 0  \qquad  & {\rm for } \quad t=0 \,.\\
         \end{array}\right.        
 \end{equation*}
In order to prove the maximal regularity 
estimate in the strip we extend the problem 
(\ref{STOKES-STRIP}) in the half space.
By symmetry, it is enough to consider for the moment
the extension to the upper half space.
\newline
Consider the localization $(\tilde u, \tilde p):=(\eta u,\eta p)$ where
 
 \begin{equation}\label{cutoff} \eta(z) \mbox{ is a cut-off function for } \left[0,\frac{1}{2}\right) \mbox{ in } [0,1) \,.\end{equation}
 Extending $(\tilde u, \tilde p)$ by zero they can be viewed as functions in the upper half space.
The couple  $(\tilde u, \tilde p)$ satisfies 

 \begin{equation}\label{UHS}
  \left\{\begin{array}{rclc}
      \partial_t \tilde u-\Delta \tilde u+\nabla \tilde p &=& \tilde f \qquad & {\rm for } \quad z>0\,,\\
        \nabla\cdot \tilde u &=& \tilde\rho \qquad & {\rm for } \quad z>0 \,,\\
        \tilde u &=&  0 \qquad & {\rm for } \quad z=0\,,\\
         \tilde u &=& 0  \qquad  & {\rm for } \quad t=0 \,,\\
         \end{array}\right.        
 \end{equation}
where 
\begin{equation}\label{Defi}
\tilde f:=\eta f-2(\partial_z \eta)\partial_z u-(\partial_z^2\eta )u+(\partial_z\eta )pe_z, \qquad \qquad  \tilde\rho:=(\partial_z\eta )u^z\,.
\end{equation}
%

\subsection{Maximal regularity in the upper half space}
In the half space, taking 
advantages from the explicit representation of the solution 
via Green functions, we prove 
the regularity estimates which will be crucial in the proof of
Theorem \ref{th1}.
\begin{proposition}[Maximal regularity in the upper half space]\label{pr1}\ \\
Consider the non-stationary Stokes equations in the upper half-space 
 \begin{equation}\label{STOKES-HALF}
  \left\{\begin{array}{rclc}
      \partial_t u-\Delta u+\nabla p &=& f \qquad & {\rm for } \quad z>0\,,\\
         \nabla\cdot u &=& \rho \qquad & {\rm for } \quad z>0 \,,\\
         u &=&  0 \qquad & {\rm for } \quad z=0\,,\\
         u &=&  0 \qquad & {\rm for } \quad t=0\,.\\
         \end{array}\right.        
 \end{equation}
 Suppose that $f$ and $\rho$ are horizontally band-limited , i.e
\begin{equation}\label{BC1}\F f(k',z,t)=0 \mbox{ unless } 1\leq R|k'|\leq 4  \mbox{ where } R\in(0,\infty)\,,\end{equation}
and 
\begin{equation}\label{BC2}\F \rho(k',z,t)=0 \mbox{ unless } 1\leq R|k'|\leq 4  \mbox{ where } R\in(0,\infty)\,.\end{equation}
Then
\begin{eqnarray*}
 &&||\partial_t u^z||_{(0,\infty)}+||\nabla^2 u^z||_{(0,\infty)}+||\nabla p||_{(0,\infty)}+||(\partial_t -\partial_z^2)u'||_{(0,\infty)}+||\nabla'\nabla u'||_{(0,\infty)}\\
 &\lesssim&||f||_{(0,\infty)}+||(-\Delta')^{-\frac{1}{2}}\partial_t \rho||_{(0,\infty)}+||(-\Delta')^{-\frac{1}{2}}\partial_z^2 \rho ||_{(0,\infty)}+||\nabla \rho||_{(0,\infty)},
\end{eqnarray*}
where $||\cdot||_{(0,\infty)}$ denotes the norm
\begin{equation}\label{NORM-HALF}||f||_{(0,\infty)}:=||f||_{R;(0,\infty)}\inf_{f=f_0+f_1}\left\{\left\langle\sup_{0<z<\infty} |f_0|\right\rangle+\left\langle\int_0^{\infty} |f_1|\frac{dz}{z}\right\rangle\right\}\,,\end{equation}
where $f_0$ and $f_1$ satisfy the bandedness assumption (\ref{BC1}).
\end{proposition}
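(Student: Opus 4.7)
The plan is a two-step strategy: first reduce to the case $\rho=0$, then use an explicit representation of the half-space Stokes solution together with the horizontal bandedness to estimate against the interpolation norm.

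For the first step, I would construct a corrector $v$ with $\nabla\cdot v=\rho$, $v|_{z=0}=0$, $v|_{t=0}=0$, built from $\rho$ by horizontal Fourier multipliers (invertible on the annulus where the bandedness lives) together with an elementary boundary correction in $z$. Then $u-v$ satisfies a divergence-free Stokes system with modified forcing $f-(\partial_t-\Delta)v$. Under the bandedness of $\rho$, the horizontal multipliers $(-\Delta')^{-1/2}$ and $\partial_j'/|\nabla'|$ are bounded on our function class, so the $\|\cdot\|_{(0,\infty)}$-norm of $(\partial_t-\Delta)v$ is controlled by exactly the three $\rho$-terms on the right-hand side of the claim, namely $\|(-\Delta')^{-1/2}\partial_t\rho\|_{(0,\infty)}$, $\|(-\Delta')^{-1/2}\partial_z^2\rho\|_{(0,\infty)}$, and $\|\nabla\rho\|_{(0,\infty)}$.

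For the second step, with $\rho=0$, I would use a Solonnikov/Ukai-type formula to write each of the five left-hand side quantities as a convolution in $(t,x')$ composed with an integral transform in $z$, whose horizontal Fourier symbol is supported in the annulus $1\le R|k'|\le 4$. The bandedness collapses every horizontal multiplier to a bounded one of size $O(R^{-\alpha})$, reducing the problem to kernel estimates in the vertical and time variables parameterized by $|k'|\sim R^{-1}$. Since this solution operator $T$ preserves horizontal bandedness, any decomposition $f=f_0+f_1$ yields a valid decomposition $Tf=Tf_0+Tf_1$ with both pieces band-limited, so it suffices to prove endpoint boundedness $L^\infty_z(L^1_{t,x'})\to L^\infty_z(L^1_{t,x'})$ and $L^1\bigl(dt\,dx'\,\tfrac{dz}{z}\bigr)\to L^1\bigl(dt\,dx'\,\tfrac{dz}{z}\bigr)$ separately; via Young's inequality in $(t,x')$ these reduce to Schur-type tests on the scalar $z$-to-$z'$ kernel obtained by taking the $L^1_{t,x'}$-norm of the full kernel.

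The delicate endpoint is the $L^1(dz/z)$ one, since $1/z$ just fails to be a Muckenhoupt weight: standard Calder\'on--Zygmund theory is unavailable, and the bound can survive only because the half-space Stokes Green tensor vanishes at the source boundary $z'=0$ owing to the no-slip condition, which compensates the singularity of the weight. Making this boundary vanishing quantitative at the bandedness scale $R$, uniformly in $z$ and $z'$, will be the technical heart of the proof; the analogous issue at $z'=\infty$ will be controlled by the exponential decay of the semigroup kernels at horizontal frequency $|k'|\gtrsim R^{-1}$.
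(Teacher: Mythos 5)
Your overall reading of the difficulty is right: the $L^{1}(dt\,dx'\,dz/z)$ endpoint is non-Muckenhoupt and survives only because the no-slip boundary condition forces the half-space kernels to vanish at the source wall, and the paper (through the odd reflection in Lemma~\ref{lemma3}, producing the difference $\Gamma_{1}(z-\tilde z)-\Gamma_{1}(z+\tilde z)$, together with Lemma~\ref{Lemma5}) makes exactly that cancellation quantitative. However, your route is genuinely different from the paper's, and one of its two steps has a gap.

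The main structural difference is that the paper never eliminates $\rho$. Instead it factors the Stokes solution operator, via $\Delta=(\partial_z+(-\Delta')^{1/2})(\partial_z-(-\Delta')^{1/2})$, into the chain (\ref{FraBack}) $\to$ (\ref{Heat1}) $\to$ (\ref{FraFor}) $\to$ (\ref{Heat2})/(\ref{horvel}), carrying $\rho$ along as a source at each stage, and proves the four maximal regularity estimates of Proposition~\ref{prop3} separately. No closed Solonnikov/Ukai formula is invoked; each stage is an ODE in $z$ (Poisson kernel) or a heat problem (reflection plus Duhamel), so the kernel analysis stays elementary and no corrector is needed.

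The gap is in your first step. To force $v|_{z=0}=0$ after choosing, say, $v'=-\nabla'(-\Delta')^{-1}\rho$, $v^z=0$, you must add a boundary correction $w$ with $w|_{z=0}=-v'|_{z=0}$ and $\nabla\cdot w=0$. Any ``elementary'' choice of $w$ (Poisson extension of the trace, $e^{-z(-\Delta')^{1/2}}$ times the trace, etc.) makes $(\partial_t-\Delta)w$ depend on the boundary trace $\nabla'(-\Delta')^{-1}\partial_t\rho|_{z=0}$. The interpolation norm $\|\cdot\|_{(0,\infty)}$ of (\ref{NORM-HALF}) does \emph{not} control boundary traces: in a decomposition $g=g_0+g_1$, the $L^\infty_z(L^1_{t,x'})$ piece $g_0$ sees $g_0|_{z=0}$, but the weighted-$L^1$ piece $g_1$ does not, since $\langle\int_0^\infty|g_1|\,dz/z\rangle$ gives no pointwise-in-$z$ information. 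So the claim that $\|( \partial_t-\Delta)v\|_{(0,\infty)}$ is controlled by $\|(-\Delta')^{-1/2}\partial_t\rho\|_{(0,\infty)}+\|(-\Delta')^{-1/2}\partial_z^2\rho\|_{(0,\infty)}+\|\nabla\rho\|_{(0,\infty)}$ is not justified as stated; the trace term is not absorbed into those norms. The paper's chain avoids this precisely because $\rho$ enters each intermediate problem as a bulk source and the vertical Duhamel integrals never evaluate $\rho$ on the boundary. Either build a corrector whose construction genuinely avoids taking a trace of $\rho$ (which would be the content of a separate lemma, not ``elementary''), or adopt the paper's strategy of threading $\rho$ through the factorization.
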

The first ingredient to establish Proposition \ref{pr1} is a suitable representation 
of the solution operator $(f=(f',f^z),\rho)\rightarrow u=(u',u^z) $
of the Stokes equations with the no-slip  boundary condition.
In the case of no-slip boundary condition the Laplace operator  
has to be factorized as
$\Delta=\partial_z^2+\Delta'=(\partial_z+(-\Delta')^{\frac{1}{2}})(\partial_z-(-\Delta')^{\frac{1}{2}})$.
In this way the solution operator  to the Stokes  equations with the no-slip boundary
 condition (\ref{STOKES-HALF}) can be written as the fourfold composition of solution operators 
to three more elementary boundary value problems: 
\begin{itemize}

\item Backward fractional diffusion equation (\ref{FraBack}):

\begin{equation}\label{FraBack}
  \left\{\begin{array}{rclc}
     (\partial_z-(-\Delta')^{\frac{1}{2}})\phi &=& \nabla\cdot f-(\partial_t-\Delta )\rho \qquad & {\rm for } \quad  z>0 \,,\\
        \phi &\rightarrow& 0  \qquad & {\rm for } \quad z\rightarrow \infty.\\
         \end{array}\right.        
 \end{equation}

\item  Heat equation (\ref{Heat1}):
\begin{equation}\label{Heat1}
  \left\{\begin{array}{rclc}
  (\partial_t-\Delta)v^z&=&(-\Delta')^{\frac{1}{2}}(f^z-\phi)-\nabla'\cdot f'+(\partial_t-\Delta)\rho \qquad & {\rm for } \quad  z>0,\\
  v^z&=& 0  \qquad & {\rm for } \quad  z=0 \,,\\
  v^z&=& 0  \qquad & {\rm for } \quad  t=0 \,.\\
  \end{array}\right. 
\end{equation}

\item  Forward fractional diffusion equation (\ref{FraFor}): 

\begin{equation}\label{FraFor}
  \left\{\begin{array}{rclc}
  (\partial_z+(-\Delta')^{\frac{1}{2}})u^z&=& v^z  \qquad & {\rm for } \quad   z>0\,,\\
  u^z&=&0   \qquad & {\rm for } \quad  z=0\,.\\
  \end{array}\right. 
\end{equation}

\item Heat equation (\ref{Heat2}):

\begin{equation}\label{Heat2}
 \left\{\begin{array}{rclc}
(\partial_t-\Delta)v'&=&(1+\nabla'(-\Delta')^{-1}\nabla'\cdot)f'   \qquad & {\rm for } \quad   z>0\,,\\
v'&=& 0   \qquad & {\rm for } \quad z=0\,,\\
v'&=& 0   \qquad & {\rm for } \quad  t=0\,.\\
  \end{array}\right.
\end{equation}
Finally set
\begin{equation}
\label{horvel}u'=v'-\nabla'(-\Delta')^{-1}(\rho-\partial_z u^z)\,.
\end{equation}

\end{itemize}
In order to prove the validity of the decomposition we need to argue that 
$$(\partial_t-\Delta)u-f \mbox{ is irrotational }\,,$$ which 
reduces to prove that
 \begin{equation*}
  (\partial_t-\Delta)u'-f' \mbox{ is irrotational in } x'
 \end{equation*}
 and
 \begin{equation}\label{SEC}
  \partial_z ((\partial_t-\Delta)u'-f')=\nabla'((\partial_t-\Delta)u^z-f^z)\,.
 \end{equation} 
 Let us consider for simplicity $\rho=0.$
The first statement follows easily from the definition. Indeed by definition (\ref{horvel}) and  equation (\ref{Heat2}), 
 $$ (\partial_t-\Delta)u'-f' =\nabla' ((-\Delta')^{-1}\nabla'\cdot f'+(-\Delta')^{-1}\partial_z u^z).$$
Let us now focus on (\ref{SEC}), which by using (\ref{horvel}) and (\ref{Heat2}) can be rewritten as
  $$\partial_z \nabla'((-\Delta')^{-1}\nabla'\cdot f'+(-\Delta')^{-1}(\partial_t-\Delta)\partial_zu^z)=\nabla'((\partial_t-\Delta)u^z-f^z)\,.$$
Because of the periodic boundary conditions in the horizontal direction, the latter is equivalent to
$$\partial_z (-\Delta')((-\Delta')^{-1}\nabla'\cdot f'+(-\Delta')^{-1}(\partial_t-\Delta)\partial_zu^z)=(-\Delta')((\partial_t-\Delta)u^z-f^z),$$
that, after factorizing  $\Delta=(\partial_z-(-\Delta')^{\frac{1}{2}})(\partial_z+(-\Delta')^{\frac{1}{2}})$, turns into
$$(\partial_z-(-\Delta')^{\frac{1}{2}})(\partial_t-\Delta)(\partial_z+(-\Delta')^{\frac{1}{2}}) u^z=(-\Delta')f^z-\partial_z\nabla'\cdot f'\,.$$
One can easily check that the identity holds true by applying (\ref{FraFor}), (\ref{Heat1}) and (\ref{FraBack}).
The no-slip boundary condition is trivially satisfied, indeed by (\ref{FraFor}) we have $u^z=0$ and $\partial_z u^z=0$. 
The combination of (\ref{horvel}) with $\partial_z u^z=0$ gives $u'=0$.
%
%
%
%
%
%

For each step of the decomposition of the Navier Stokes equations 
we will derive maximal regularity-type estimates. These are summed 
up in the following

\begin{proposition}\label{prop3}\ \\ 
 \begin{enumerate} 
  \item Let $\phi,f,\rho$ satisfy the problem (\ref{FraBack}) and assume
        $f,\rho$ are  horizontally band-limited, i.e 
        $$\F f(k',z,t)=0 \mbox{ unless } 1\leq R|k'|\leq 4$$
        and 
        $$\F \rho(k',z,t)=0 \mbox{ unless } 1\leq R|k'|\leq 4.$$
        
        Then, 
        
        \begin{equation*}
          ||\phi||_{(0,\infty)}\lesssim ||f||_{(0,\infty)}+||(-\Delta')^{-\frac{1}{2}}\partial_t \rho||_{(0,\infty)}+||\nabla \rho||_{(0,\infty)} \,.\label{A}
        \end{equation*}
        
  \item  Let $v^z, f, \phi, \rho$ satisfy the problem (\ref{Heat1}) and assume
          $f,\phi,\rho$ are  horizontally band-limited, i.e 
          $$\F f(k',z,t)=0 \mbox{ unless } 1\leq R|k'|\leq 4\,,$$         
          $$\F \phi(k',z,t)=0 \mbox{ unless } 1\leq R|k'|\leq 4\,$$
          and
          $$\F \rho(k',z,t)=0 \mbox{ unless } 1\leq R|k'|\leq 4\,.$$
          
          Then,
         \begin{equation}\label{B}
         \begin{array}{rclc}
          &&||\nabla v^z||_{(0,\infty)}+||(-\Delta)^{-\frac{1}{2}}(\partial_t-\partial_z^2)v^z||_{(0,\infty)}\\
          &\lesssim& ||f||_{(0,\infty)}+||\phi||_{(0,\infty)}+||(-\Delta')^{-\frac{1}{2}}\partial_t\rho||_{(0,\infty)} \\
          &+&||(-\Delta)^{-\frac{1}{2}}\partial_z^2\rho||_{(0,\infty)}+||\nabla\rho||_{(0,\infty)}\,.       
         \end{array}  
         \end{equation}       
                  
   \item Let $u^z, v^z$ satisfy the problem (\ref{FraFor}) and assume 
         $v^z$ is  horizontally band-limited, i.e 
          $$\F v^z(k',z,t)=0 \mbox{ unless } 1\leq R|k'|\leq 4\,.$$
          Then,   
         \begin{equation}\label{C}
           \begin{array}{rclc}
           &&||\partial_t u^z||_{(0,\infty)}+||\nabla^2u^z||_{(0,\infty)}+||(-\Delta')^{-\frac{1}{2}}\partial_z(\partial_t-\partial_z^2)u^z||_{(0,\infty)}\\
          &\lesssim & ||\nabla v^z||_{(0,\infty)}+||(-\Delta')^{-\frac{1}{2}}(\partial_t-\partial_z^2)v^z||_{(0,\infty)}\,.\\    
           \end{array}              
          \end{equation}
   \item Let $v',f'$, satisfy the problem (\ref{Heat2}) and assume 
          $f'$ is horizontally band-limited, i.e 
          $$\F f(k',z,t)=0 \mbox{ unless } 1\leq R|k'|\leq 4\,.$$          
          Then,
          \begin{equation}\label{D}
           ||\nabla'\nabla v'||_{(0,\infty)}+||(\partial_t-\partial_z^2)v'||_{(0,\infty)}\lesssim ||f'||_{(0,\infty)}\,. 
          \end{equation}
%
 \end{enumerate}

\end{proposition}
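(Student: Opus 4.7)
The plan is to establish each of the four assertions (A)--(D) by writing down the explicit solution formula for the corresponding elementary boundary value problem -- convolution against the exponential kernel $e^{-|s-z|(-\Delta')^{1/2}}$ in $z$ for the fractional diffusion problems \eqref{FraBack} and \eqref{FraFor}, and Duhamel against the Dirichlet heat kernel on the half-line for \eqref{Heat1} and \eqref{Heat2} -- and then verifying that the resulting operator is bounded with respect to the interpolation norm $\|\cdot\|_{(0,\infty)}$. The crucial structural input throughout is the horizontal bandedness $1\le R|k'|\le 4$: on any of the functions under consideration the operator $(-\Delta')^{1/2}$ is comparable to $1/R$ and freely invertible, so that it may be traded for the scalar $1/R$, and the kernel $e^{-(s-z)(-\Delta')^{1/2}}$ is an honest convolution in $z$ of scale $R$. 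In particular, all the "mixed" fractional objects appearing on the right-hand sides of (A)--(C), such as $(-\Delta')^{-1/2}\partial_t\rho$, are legitimate band-limited tempered distributions whose $\|\cdot\|_{(0,\infty)}$-norm controls $\partial_t\rho$ modulo the trivial factor $1/R$.

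For each of the four assertions I would unfold the infimum defining $\|\cdot\|_{(0,\infty)}$ on the right-hand side, splitting every source into a part $g_0\in L^\infty_z(L^1_{t,x'})$ and a part $g_1$ in the weighted space with measure $dz/z$, both still band-limited, and then propagate each piece through the explicit solution operator, arguing that the corresponding contribution to $\phi$, $v^z$, $u^z$ or $v'$ (or to the derivatives appearing on the left-hand side) admits a decomposition of the same type with comparable norms. For (B) and (D) this amounts to the standard half-line maximal regularity of the heat equation tested against both borderline Lebesgue endpoints, which can be carried out directly on the Dirichlet heat kernel once bandedness is used to make $\nabla'$ a bounded multiplier. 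Tangential derivatives of the source in (A) and (B) are absorbed by the identity $\nabla'=\nabla'(-\Delta')^{-1/2}\,(-\Delta')^{1/2}$ and cost only a factor $1/R$; vertical derivatives are disposed of by integration by parts in $z$ against the exponential kernel, the resulting boundary term at $z=0$ vanishing thanks to the Dirichlet data built into \eqref{FraFor} and \eqref{Heat1}, and the boundary term at $z=\infty$ by the prescribed decay.

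The main obstacle, concentrated in (A) and (C), is that the weight $dz/z$ just fails to be Muckenhoupt, so the naive estimate
\begin{equation*}
\int_0^\infty \Bigl|\int_0^\infty K(s-z)\,g_1(s)\,ds\Bigr|\,\frac{dz}{z}
\;\le\;\int_0^\infty |g_1(s)|\Bigl(\int_0^\infty |K(s-z)|\,\frac{dz}{z}\Bigr)ds
\end{equation*}
diverges logarithmically because the inner integral blows up near $z\sim R$. This is exactly the place where bandedness must be used beyond just bounding a Fourier multiplier: in the regime $s\gtrsim R$ the kernel has unit mass and the outer weight self-reproduces as $1/s$, delivering the desired bound, while in the near regime $s\lesssim R$ one uses that under bandedness the input $g_1$ may be reallocated to the $L^\infty_z(L^1_{t,x'})$-component of the decomposition of the \emph{output} at a cost proportional to its weighted $L^1$-norm, i.e.\ the $K$-convolution of a $g_1$-piece supported in $\{s\lesssim R\}$ is treated as belonging to the $(\sup_z)$-branch of the infimum. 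In other words, one exploits the interpolation structure on both sides of the inequality, not only on the source. Once this kernel lemma is in place, (A)--(D) follow by direct substitution, the specific combinations of derivatives in the hypotheses being precisely those that match the integration-by-parts boundary contributions and make the resulting expressions band-limited.
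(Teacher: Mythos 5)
Your plan correctly locates the genuinely hard analytic point -- the weight $dz/z$ is not Muckenhoupt, so the naive kernel bound diverges logarithmically, and the fix is to exploit the interpolation structure of $\|\cdot\|_{(0,\infty)}$ on the \emph{output} as well as the input by means of a dyadic decomposition in the source variable, reallocating the near-field piece to the $\sup_z$ branch. This is exactly what the paper does, though it quarantines that analysis in stand-alone Lemmas \ref{lemma1}--\ref{lemma3} and makes the proof of Proposition \ref{prop3} a pure algebra-plus-lemma-invocation argument. So far so good.

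The gap in your proposal is in the part you dismiss as ``direct substitution.'' The specific right-hand sides in (A)--(C) are not what fall out of a naive kernel estimate; to land precisely on those quantities the paper performs nontrivial reductions that you do not account for. For (A), the source of \eqref{FraBack} is $\nabla\cdot f-(\partial_t-\Delta)\rho$, which contains $\partial_z f^z$ and $\partial_z^2\rho$, and the paper's move is to subtract $(\partial_z-(-\Delta')^{1/2})(f^z+\partial_z\rho)$ from both sides (an exact solution of the homogeneous operator), which eliminates these vertical derivatives \emph{before} any kernel estimate is made; your proposed mechanism -- integration by parts with boundary terms killed by Dirichlet data -- does not apply here, since \eqref{FraBack} carries only a decay condition at $z\to\infty$ and no data at $z=0$, so the integration by parts leaves an interior evaluation of $f^z$ and $\partial_z\rho$ that must be recognised and controlled. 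For (C), the left-hand side contains $\partial_z^2 u^z$ and $\partial_t u^z$, but the right-hand side offers only $\nabla v^z$ and $(-\Delta')^{-1/2}(\partial_t-\partial_z^2)v^z$; bridging this requires the identity $\partial_z^2 u^z=-\Delta' u^z-(-\Delta')^{1/2}v^z+\partial_z v^z$ (obtained by differentiating \eqref{FraFor} and re-substituting), and the inhomogeneous Dirichlet trace $(-\Delta')^{-1/2}(\partial_t-\partial_z^2)u^z|_{z=0}=(-\Delta')^{-1/2}\partial_z v^z|_{z=0}$ coming from that same identity at $z=0$, which is then fed into the forward problem with nonzero boundary data (the role of Lemma \ref{lemma2} with $g\neq 0$). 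None of this is visible in your sketch. So while your kernel lemma and interpolation technology would serve as a valid replacement for the paper's Lemmas \ref{lemma1}--\ref{lemma3}, you still owe exactly the algebraic bookkeeping that constitutes the paper's proof of Proposition \ref{prop3}, and you have mislocated where the boundary terms arise and how they are cancelled.
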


\subsection{Proof of Proposition \ref{pr1}}

By an easy application of Proposition \ref{prop3}, we will now prove the maximal regularity estimate on the upper  half space.

\begin{proof}[Proof of Proposition \ref{pr1}]\ \\
From Proposition \ref{prop3} we have the following bound for the  vertical component of the velocity $u$

          \begin{eqnarray*} 
           &&||\partial_t u^z||_{(0,\infty)}+||\nabla^2u^z||_{(0,\infty)}+||(-\Delta')^{-\frac{1}{2}}\partial_z(\partial_t-\partial_z^2)u^z||_{(0,\infty)}\\
          &\stackrel{(\ref{C})}{\lesssim} &||\nabla v^z||_{(0,\infty)}+||(-\Delta')^{-\frac{1}{2}}(\partial_t-\partial_z^2)v^z||_{(0,\infty)}\\
          &\stackrel{(\ref{B})}{\lesssim} &||f||_{(0,\infty)}+||\phi||_{(0,\infty)}+||(-\Delta')^{-\frac{1}{2}}\partial_t\rho||_{(0,\infty)}
          +||(-\Delta)^{-\frac{1}{2}}\partial_z^2\rho||_{(0,\infty)}+||\nabla\rho||_{(0,\infty)} \\
           &\stackrel{(\ref{A})}{\lesssim} & ||f||_{(0,\infty)}+||(-\Delta')^{\frac{1}{2}}\partial_t\rho||_{(0,\infty)}+||(-\Delta)^{-\frac{1}{2}}\partial_z^2\rho||_{(0,\infty)}+||\nabla\rho||_{(0,\infty)} \,.       
         \end{eqnarray*}
 Instead for the horizontal components of the velocity $u'$ we have 
        \begin{eqnarray*}
        &&||(\partial_t-\partial_z^2)u'||_{(0,\infty)}+||\nabla'\nabla u'||_{(0,\infty)}\\
        &\stackrel{(\ref{horvel})}{\lesssim}& ||(\partial_t-\partial_z^2)v'||_{(0,\infty)}+||\nabla'\nabla v'||_{(0,\infty)}\\
        &+&||(-\Delta')^{-\frac{1}{2}}(\partial_t-\partial_z^2)\rho||_{(0,\infty)}+||\nabla\rho||_{(0,\infty)}\\
        &+&||(-\Delta')^{-\frac{1}{2}}\partial_z(\partial_t-\partial_z^2)u^z||_{(0,\infty)}+||\partial_z\nabla u^z||_{(0,\infty)}\\
        &\stackrel{(\ref{B}),(\ref{C}),(\ref{D})}{\lesssim}& ||f||_{(0,\infty)}+||(-\Delta')^{-\frac{1}{2}}\partial_t\rho||_{(0,\infty)}+||(-\Delta)^{-\frac{1}{2}}\partial_z^2\rho||_{(0,\infty)}+||\nabla\rho||_{(0,\infty)}\,.        
      \end{eqnarray*}  
 Summing up we obtain
\begin{equation}\label{ESTI}
\begin{array}{rclc}
&&||\partial_t u^z||_{(0,\infty)}+||\nabla^2u^z||_{(0,\infty)}+||(\partial_t-\partial_z^2)u'||_{(0,\infty)}+||\nabla'\nabla u'||_{(0,\infty)}\\
&\lesssim&||f||_{(0,\infty)}+||(-\Delta')^{-\frac{1}{2}}\partial_t\rho||_{(0,\infty)}+||(-\Delta)^{-\frac{1}{2}}\partial_z^2\rho||_{(0,\infty)}+||\nabla\rho||_{(0,\infty)}\,.
\end{array}
\end{equation}
The bound for the $\nabla p$ follows by equations (\ref{STOKES-HALF}) and applying (\ref{ESTI}).
\end{proof}

\subsection{Proof of Proposition \ref{prop3} }
This section is devoted to the proof of Proposition \ref{prop3}, which rely on a series of Lemmas
(Lemma \ref{lemma1}, Lemma \ref{lemma2} and Lemma \ref{lemma3}) that we state here and prove in Section \ref{tec}.

The following Lemmas contain the basic maximal regularity 
estimates for the three auxiliary problems. These estimates,
together with the bandedness assumption in the form  of (\ref{P}), (\ref{Q})
and (\ref{R}) will be the main ingredients for the proof
of Proposition \ref{prop3}.

\begin{lemma}\label{lemma1}\ \\
  Let $u,f$ satisfy  the problem

 \begin{equation}\label{I}
 \left\{\begin{array}{rclc}
        (\partial_z-(-\Delta')^{\frac{1}{2}})u&=&f  \qquad & {\rm for } \quad z>0\,,\\
        u&\rightarrow&0   \qquad & {\rm for } \quad z\rightarrow\infty\,
      \end{array}\right.
  \end{equation}   
    and assume $f$ to be horizontally band-limited, i.e
   $$\F f(k',z,t)=0 \quad \mbox{ unless }\quad 1\leq R|k'|\leq 4\,.$$
  Then, 
   \begin{equation}\label{1}
    ||\nabla u||_{(0,\infty)}\lesssim||f||_{(0,\infty)}\,.
   \end{equation}
\end{lemma}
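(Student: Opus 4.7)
My plan is to solve the equation explicitly in horizontal Fourier space and then handle the borderline weight $1/z$ through a careful splitting that exploits the bandedness. In Fourier, $(\partial_z - |k'|)\widehat u(k',z) = \widehat f(k',z)$ with $\widehat u \to 0$ as $z \to \infty$ gives $\widehat u(k',z) = -\int_z^\infty e^{-(z'-z)|k'|}\widehat f(k',z')\,dz'$; by the scaling $z\mapsto z/R$, $k'\mapsto Rk'$ I reduce to $R=1$ so that bandedness reads $1\leq|k'|\leq 4$. The solution is then a horizontal convolution $u(x',z) = -\int_0^\infty (K_\eta *_{x'} f(\cdot,z+\eta))(x')\,d\eta$ with kernel $K_\eta$ of Fourier multiplier $e^{-|k'|\eta}\chi(k')$, smooth and compactly supported. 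The two properties I will repeatedly use are
\[
\|K_\eta\|_{L^1_{x'}}\lesssim e^{-c\eta}\qquad\text{and}\qquad \bigl\|\sup_{\eta\in[a,b]}|K_\eta|\bigr\|_{L^1_{x'}}\lesssim e^{-ca},
\]
the second following from $K_\eta-K_a=\int_a^\eta\partial_{\eta'}K_{\eta'}\,d\eta'$ and $\|\partial_\eta K_\eta\|_{L^1_{x'}}\lesssim e^{-c\eta}$.

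Writing $\nabla u = (\nabla' u,\,(-\Delta')^{1/2}u) + f e_z$ and noting $\|f e_z\|_{(0,\infty)} = \|f\|_{(0,\infty)}$, the task reduces to bounding $\|A u\|_{(0,\infty)}\lesssim\|f\|_{(0,\infty)}$ for any first-order horizontal Fourier multiplier $A$ (a component of $\nabla'$ or $(-\Delta')^{1/2}$); its kernel $L_\eta := A K_\eta$ satisfies the same two estimates. Given a near-optimal decomposition $f = f_0+f_1$ for $\|f\|_{(0,\infty)}$, I set $Au = Au_0+Au_1$ by linearity and \emph{further} split
\[
Au \;=\; \underbrace{\bigl(Au_0 + \mathbf{1}_{\{z<1\}}Au_1\bigr)}_{G_0} \;+\; \underbrace{\mathbf{1}_{\{z>1\}}Au_1}_{G_1},
\]
a vertical cut-off which preserves horizontal bandedness. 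I will place $G_0$ in the $L^\infty_z L^1_{t,x'}$ slot and $G_1$ in the $L^1(dz/z;L^1_{t,x'})$ slot of the $(0,\infty)$-norm.

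For $G_0$, the representation $|Au_i(x',z,t)|\leq \int_0^\infty (|L_\eta|*_{x'}|f_i|(\cdot,z+\eta,t))(x')\,d\eta$ immediately yields $\sup_z|Au_0|\leq G*_{x'}\sup_{z''}|f_0|$ with $G(x'):=\int_0^\infty|L_\eta(x')|\,d\eta$ and $\|G\|_{L^1_{x'}}\lesssim 1$, hence $\langle\sup_z|Au_0|\rangle\lesssim\langle\sup_z|f_0|\rangle$. For $\mathbf{1}_{\{z<1\}}Au_1$, interchanging the supremum in $z$ with the $dz'$ integration bounds $\sup_{z<1}|Au_1|$ by $\int_0^\infty (H_{z'}*_{x'}|f_1|(\cdot,z',t))(x')\,dz'$ with $H_{z'}:=\sup_{\eta\in(\max(0,z'-1),z')}|L_\eta|$; the maximal-kernel bound gives $\|H_{z'}\|_{L^1_{x'}}\lesssim\max(1,e^{-(z'-1)})$, and the elementary inequality $\max(1,e^{-(z'-1)})\leq C/z'$ for every $z'>0$ (checking that $z'e^{-(z'-1)}\leq 1$ on $z'\geq 1$) yields $\langle\sup_{z<1}|Au_1|\rangle\lesssim\langle\int_0^\infty|f_1|\,dz'/z'\rangle$. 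For $G_1$ on $\{z>1\}$, Fubini combined with $\int_1^{z'}e^{-(z'-z)}\,dz/z\lesssim 1/z'$ (split the $z$-range at $z'/2$) gives $\langle\int_1^\infty|Au_1|\,dz/z\rangle\lesssim\langle\int|f_1|\,dz'/z'\rangle$. Taking the infimum over decompositions then closes the estimate.

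The main obstacle is controlling $\sup_{z<1}|Au_1|$: a pointwise-in-$\eta$ $L^1_{x'}$ bound of the kernel gives only $\int_0^{z'}dz/z=\infty$ near $z=0$, which is precisely the failure of $1/z$ to be Muckenhoupt $A_1$. The band-limitation rescues the argument through the uniform $L^1_{x'}$ bound on the \emph{maximal} kernel $\sup_\eta|L_\eta|$, and the sharp inequality $\max(1,e^{-(z'-1)})\leq C/z'$ then transfers the $1/z$-weight from the integration variable $z$ to the location $z'$ of the support of $f_1$, matching exactly the weight in the norm of $f_1$.
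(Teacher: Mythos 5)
Your proof is correct in substance and reaches the conclusion by a genuinely different route than the paper. Both proofs start from the explicit Duhamel representation of the solution to \eqref{I} and exploit the decay in $\eta=z'-z$ of the convolution kernel that bandedness supplies, but the way the borderline $dz/z$ weight is handled differs. The paper first reduces to two extreme-case bounds \eqref{V} and \eqref{Z}: a sup-to-sup estimate, and a bound of the full $\|\cdot\|_{(0,\infty)}$-norm by $\int\langle|f|\rangle'\,dz/z$; the second is established by \emph{dyadically decomposing $f$ in $z$}, splitting the output for each dyadic packet at the level $H=2^{n-1}$ commensurate with the packet's altitude (estimates \eqref{X} and \eqref{XX}), and summing. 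You instead split the output once, at the fixed level $z=1$, and absorb the near-boundary contribution $\sup_{z<1}|Au_1|$ with a \emph{maximal-kernel} $L^1$ bound on $H_{z'}=\sup_{\eta\in(\max(0,z'-1),z')}|L_\eta|$, which turns the loss from the $\sup$ into a $1/z'$ decay at the source altitude. This replaces the paper's dyadic summation with a single maximal-function argument, and is cleaner in the sense that no summability over scales needs to be checked; the paper's version, on the other hand, only needs pointwise-in-$\eta$ kernel bounds $\min\{1/R, R/\eta^2\}$ (no maximal kernel) and is thus closer to the crude Poisson-kernel estimates stated in \eqref{AA}. Your overall structure — treating $Au_0$ via the sup slot, treating $Au_1$ by allocating its output to the two slots of $\|\cdot\|_{(0,\infty)}$ with a vertical cut-off that preserves horizontal bandedness — matches the architecture the paper encodes in \eqref{V} and \eqref{Z}, so the two proofs are concordant at the level of strategy and differ mainly in this technical device.

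One correction to the writing: in the displayed bound on the maximal kernel, $\|H_{z'}\|_{L^1_{x'}}\lesssim\max(1,e^{-(z'-1)})$ should read $\|H_{z'}\|_{L^1_{x'}}\lesssim e^{-c\max(0,z'-1)}$, i.e.\ $\min(1,e^{-c(z'-1)})$; as written, $\max(1,e^{-(z'-1)})=1$ for $z'\geq 1$, which is \emph{not} $\lesssim 1/z'$. Your parenthetical verification $z'e^{-(z'-1)}\leq 1$ on $z'\geq 1$ is precisely what proves $\min(1,e^{-(z'-1)})\lesssim 1/z'$, so this is clearly a typographical slip rather than a gap; but you should also note that $c<1$ in general (derivatives of the multiplier introduce polynomial factors), which is harmless since $z'e^{-c(z'-1)}$ is still bounded for every $c>0$.
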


\begin{lemma}\label{lemma2}\ \\ 	
   Let $u,f,g=g(x',t)$ satisfy  the problem
    
  \begin{equation}\label{II}
 \left\{\begin{array}{rclc}
        (\partial_z+(-\Delta')^{\frac{1}{2}})u&=&f   \qquad & {\rm for } \quad z>0\,,\\
        u&=&g   \qquad & {\rm for } \quad z=0\,
      \end{array}\right.
  \end{equation}
 and define the constant extension $\tilde g(x',z,t):=g(x',t).$ 
 Assume $f$ and $g$ to be horizontally band-limited, i.e
  $$\F f(k',z,t)=0 \quad\mbox{ unless }\quad 1\leq R|k'|\leq 4\,$$
  and
  $$\F g(k',z,t)=0 \quad\mbox{ unless }\quad 1\leq R|k'|\leq 4\,.$$
  Then
  \begin{equation}\label{3}
   ||\nabla u||_{(0,\infty)}\lesssim||f||_{(0,\infty)}+||\nabla'\tilde{g}||_{(0,\infty)}\,.
     \end{equation}
\end{lemma}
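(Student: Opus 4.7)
The plan is to solve the first-order ODE in $z$ by Duhamel's formula, treating $A:=(-\Delta')^{\frac{1}{2}}$ as a positive Fourier multiplier, and writing $u=u_g+u_f$ with
\begin{equation*}
u_g(x',z,t):=e^{-zA}g(x',t),\qquad u_f(x',z,t):=\int_0^z e^{-(z-z')A}f(x',z',t)\,dz'.
\end{equation*}
From the equation itself $\partial_z u=f-Au$, and the horizontal bandedness $1\leq R|k'|\leq 4$ makes the multipliers $A$ and $|\nabla'|$ comparable on $L^1_{x'}$ (by a routine multiplier estimate localized to the annulus). Hence bounding $\|\nabla u\|_{(0,\infty)}$ reduces to bounding $\|Au_g\|_{(0,\infty)}+\|Au_f\|_{(0,\infty)}+\|f\|_{(0,\infty)}$.

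For the boundary piece, I would place $\nabla u_g$ entirely in the $L^\infty_z L^1_{x',t}$ component of the norm. On band-limited input, $e^{-zA}$ is convolution in $x'$ with a kernel $K_z$ whose maximal envelope $K^*(x'):=\sup_{z>0}|K_z(x')|$ lies in $L^1_{x'}$: the Fourier support at scale $1/R$ yields $\|K_z\|_{L^\infty_{x'}}\lesssim R^{-(d-1)}$ uniformly in $z$ together with $R$-scaled polynomial decay in $|x'|$. Thus $\sup_z|e^{-zA}h(x',t)|\leq(|h|*K^*)(x',t)$ and Young's inequality give $\langle\sup_z|e^{-zA}h|\rangle\lesssim\langle|h|\rangle$ for band-limited $h$. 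Applying this to $h=\nabla'g$ and $h=Ag$ (comparable on the band) yields $\|\nabla u_g\|_{(0,\infty)}\lesssim\langle|\nabla'g|\rangle$, which equals $\|\nabla'\tilde g\|_{(0,\infty)}$ since $\tilde g$ is $z$-constant (the constraint $\int|h_1|\,dz/z<\infty$ in the infimum forces $h_0$ to recover $\tilde g$ essentially at $z=0$).

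For the forcing piece, the key input is the kernel estimate $\|Ae^{-sA}\|_{L^1_{x'}\to L^1_{x'}}\lesssim R^{-1}e^{-s/(2R)}$ on band-limited input---this is also what drives the proof of Lemma \ref{lemma1}. Given an almost-optimal decomposition $f=f_0+f_1$, split $u_f=u_f^{(0)}+u_f^{(1)}$ accordingly. The $u_f^{(0)}$ part is assigned to the $L^\infty_z L^1$ slot: since $\int_0^\infty R^{-1}e^{-s/(2R)}\,ds\lesssim 1$, one gets $\langle\sup_z|Au_f^{(0)}|\rangle\lesssim\langle\sup_z|f_0|\rangle$. The $u_f^{(1)}$ part is assigned to the $L^1(dz/z)L^1$ slot; by Fubini the estimate reduces to checking
\begin{equation*}
R^{-1}\int_{z'}^\infty e^{-(z-z')/(2R)}\,\frac{dz}{z}\lesssim\frac{1}{z'}\qquad\text{for all }z'>0,
\end{equation*}
from which $\langle\int_0^\infty|Au_f^{(1)}|\,dz/z\rangle\lesssim\langle\int_0^\infty|f_1|\,dz/z\rangle$ follows.

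The main obstacle is exactly the last displayed inequality. Substituting $s=z-z'$ gives $R^{-1}\int_0^\infty e^{-s/(2R)}/(z'+s)\,ds$, which is bounded by $2/z'$ when $z'\geq R$ (immediate from $1/(z'+s)\leq 1/z'$), but evaluates to $R^{-1}(2+\log(2R/z'))$ when $z'<R$. This logarithmic factor is the quantitative manifestation of the $1/z$ weight failing to be Muckenhoupt, and without bandedness the estimate would genuinely fail. The bandedness rescues the argument through the elementary inequality $R^{-1}(1+\log(R/z'))\lesssim 1/z'$ on $z'\in(0,R]$ (via $\log x\leq x$ for $x\geq 1$); the scale $R$ built into the kernel decay is precisely what makes this inequality applicable and closes the proof.
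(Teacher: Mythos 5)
Your proof is correct and follows essentially the same route as the paper's: Duhamel for the first-order $z$-ODE, convolution estimates in $z$ for the resulting integral operator, and horizontal bandedness to obtain $L^1_{x'}$-bounds on the kernel. The minor differences: the paper handles the boundary data not by a direct maximal-envelope estimate on $u_g=e^{-zA}g$ but by subtracting the constant extension $\tilde g$, which reduces to the $g=0$ case with modified forcing $f-(-\Delta')^{1/2}\tilde g$ (the term $(-\Delta')^{1/2}\tilde g$ is then converted to $\nabla'\tilde g$ via (\ref{Q})); and the paper's kernel bound is the polynomial decay $\min\{1/R,\,R/(z-z_0)^2\}$ obtained from the Poisson kernel plus (\ref{BAND1})--(\ref{BAND2}), whereas you extract the exponential decay $R^{-1}e^{-s/(2R)}$ directly from the Fourier support in the annulus $1\le R|k'|\le 4$. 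Your bound is actually sharper, but both integrate to $O(1)$ over $s\in(0,\infty)$, which is the only property used.

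One remark, though, on what you call the main obstacle. The bound $\frac{1}{z'+s}\le\frac{1}{z'}$ holds for \emph{every} $z'>0$, not only $z'\ge R$; hence
\begin{equation*}
R^{-1}\int_{z'}^{\infty}e^{-(z-z')/(2R)}\,\frac{dz}{z}\ \le\ \frac{1}{z'}\,R^{-1}\int_0^\infty e^{-s/(2R)}\,ds\ =\ \frac{2}{z'}
\end{equation*}
uniformly in $z'>0$, and the logarithm you compute for $z'<R$ never enters. This is exactly the paper's step: multiply by $1/z$, observe $1/z\le 1/z_0$ (since $z_0<z$ in the Duhamel integral), and apply Young's inequality on the half-line. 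So it is not that bandedness rescues a near-failing weighted inequality here; the weight step is a trivial monotonicity of $1/z$. What bandedness genuinely buys in this lemma is that the kernel $K(s):=\|Ae^{-sA}\|_{L^1_{x'}\to L^1_{x'}}$, restricted to band-limited inputs, lies in $L^1(ds)$ on $(0,\infty)$; without the annular support one only has $K(s)\sim 1/s$, which is not integrable at either endpoint, and that is the real obstruction that bandedness removes. Your proof is not wrong, but it attributes the role of bandedness to the wrong step, and the log analysis is a detour around an inequality that is trivially true.
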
	
  
\begin{remark}
 Clearly if $g=0$ in Lemma \ref{lemma2}, then we have
  \begin{equation}\label{2}
  ||\nabla u||_{(0,\infty)}\lesssim||f||_{(0,\infty)}\,.
  \end{equation}
\end{remark}
		  
\begin{lemma}\label{lemma3}\ \\
   Let $u,f$ satisfy  the problem
     \begin{equation}\label{III}
     \left\{\begin{array}{rclc}
	(\partial_t-\Delta) u&=&f \qquad & {\rm for } \quad z>0\,,\\ 
	u&=&0 \qquad & {\rm for } \quad z=0\,,\\
	u&=&0 \qquad & {\rm for } \quad t=0\,\\
      \end{array}\right.
     \end{equation}
     and assume $f$ to be horizontally band-limited, i.e
   $$\F f(k',z,t)=0 \quad\mbox{ unless }\quad 1\leq R|k'|\leq 4\,.$$

Then, 				
 \begin{equation}\label{4}
 ||(\partial_t-\partial_z^2)u||_{(0,\infty)}+||\nabla'\nabla u||_{(0,\infty)}\lesssim ||f||_{(0,\infty)}\,.
 \end{equation}
       
 \end{lemma}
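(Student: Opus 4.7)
I would exploit the interpolation nature of $\|\cdot\|_{(0,\infty)}$: by definition the norm is an infimum over band-limited decompositions $f=f_0+f_1$ of $\langle\sup_{0<z<\infty}|f_0|\rangle+\langle\int_0^\infty|f_1|\,dz/z\rangle$. Since the heat problem (\ref{III}) is linear, writing $u=u_0+u_1$ with each $u_i$ solving (\ref{III}) for the forcing $f_i$ shows that it is enough to prove the two endpoint maximal regularity estimates separately in the norms $\|\cdot\|_{\mathrm{I}}:=\langle\sup_z|\cdot|\rangle$ and $\|\cdot\|_{\mathrm{II}}:=\langle\int_0^\infty|\cdot|\,dz/z\rangle$, and then sum the resulting bounds before taking the infimum over decompositions. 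Both endpoints are critical for the Calder\'on--Zygmund theory, so the bandedness hypothesis is indispensable.

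Next I would use the bandedness to reduce the second-order quantities $(\partial_t-\partial_z^2)u$ and $\nabla'\nabla u$ to zeroth-order ones on $u$ and $\partial_z u$. Since $u$ has horizontal Fourier support in $1\le R|k'|\le 4$, every Riesz-type horizontal multiplier (e.g.\ $\partial_i\partial_j(-\Delta')^{-1}$) is realized as convolution against a Schwartz kernel at scale $R$, hence is bounded on each endpoint norm by Young's inequality; in particular $\|\nabla'\psi\|_\ast\lesssim R^{-1}\|\psi\|_\ast$ and $\|\Delta'\psi\|_\ast\lesssim R^{-2}\|\psi\|_\ast$ for band-limited $\psi$ and $\ast\in\{\mathrm{I},\mathrm{II}\}$. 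Combined with the identity $(\partial_t-\partial_z^2)u=f+\Delta'u$ and the factorization $\nabla'\partial_z u=\nabla'(\partial_z u)$, the lemma reduces to proving, for both endpoints,
\begin{equation*}
\|u\|_\ast\lesssim R^2\|f\|_\ast,\qquad \|\partial_z u\|_\ast\lesssim R\|f\|_\ast.
\end{equation*}

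For endpoint $\mathrm{I}$ I would combine Duhamel with the explicit Dirichlet half-space kernel $G=H_{t-s}^{d-1}(x'-y')\bigl[H_{t-s}(z-w)-H_{t-s}(z+w)\bigr]$, pull $\sup_z$ inside the $w$-integral via the elementary bound $\sup_z\int_0^\infty[H_t(z-w)-H_t(z+w)]\,dw\le 1$, and use $\|H_t^{d-1}\|_{L^1_{x'}}=1$. The bandedness enters as the exponential decay $e^{-|k'|^2 t}\le e^{-t/R^2}$ of the horizontal heat semigroup on band-limited data, yielding $\int_0^\infty e^{-t/R^2}\,dt\lesssim R^2$ in the time convolution; the corresponding $\partial_z u$ bound loses a factor $1/\sqrt{t}$ coming from $\partial_z G$, and the time integral of $e^{-t/R^2}/\sqrt{t}$ equals $R$.

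For endpoint $\mathrm{II}$, the main obstacle, the weight $1/z$ just fails to be Muckenhoupt, so the bound must come from the vanishing of the reflected kernel at $z=0$. The key observation is the mean-value estimate
\begin{equation*}
0\le H_t(z-w)-H_t(z+w)\lesssim \frac{zw}{t^{3/2}}\,e^{-(z-w)^2/(8t)},
\end{equation*}
in which the factor $z$ absorbs the singular weight $1/z$ on the left, the factor $w$ restores the companion weight $1/w$ on the right-hand side, and the bandedness cutoff $e^{-t/R^2}$ makes the remaining $t$-integration converge; a case split $w\lesssim\sqrt{t}$ versus $w\gtrsim\sqrt{t}$ is what quantifies the borderline cancellation. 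The analogous analysis for $\partial_z[H_t(z-w)-H_t(z+w)]$, which again vanishes at $z=0$ by antisymmetry of the reflected kernel's derivative, handles the $\partial_z u$ bound needed for the mixed derivative $\nabla'\partial_z u$.
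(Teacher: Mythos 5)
Your plan handles the diagonal pieces $\nabla'^2 u$ and $(\partial_t-\partial_z^2)u=f+\Delta'u$ correctly (the Duhamel formula with the odd-reflected Dirichlet kernel, the bandedness absorbing the time integral, the mean-value bound $|H_t(z-w)-H_t(z+w)|\lesssim zw\,t^{-3/2}e^{-(z-w)^2/8t}$ absorbing the $1/z$ weight), and this is essentially what the paper does for those terms. But there is a genuine gap in your treatment of the mixed derivative $\nabla'\partial_z u$ at the weighted endpoint.

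The key computation is wrong. You claim that $\partial_z\bigl[H_t(z-w)-H_t(z+w)\bigr]$ ``vanishes at $z=0$ by antisymmetry of the reflected kernel's derivative.'' It does not. Since $H_t$ is even, $\partial_z H_t$ is odd, so
\begin{equation*}
\partial_z\bigl[H_t(z-w)-H_t(z+w)\bigr]\Big|_{z=0}
=\partial_z H_t(-w)-\partial_z H_t(w)=-2\,\partial_z H_t(w)\neq 0\,.
\end{equation*}
Consequently $\partial_z u$ does not vanish on $\{z=0\}$ (its trace there is exactly $\partial_z u|_{z=0}$, the quantity the paper studies separately), and the weighted integral $\int_0^\infty|\partial_z u|\,dz/z$ is generically logarithmically divergent. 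So your intermediate claim $\|\partial_z u\|_{\mathrm{II}}\lesssim R\,\|f\|_{\mathrm{II}}$, and hence $\|\nabla'\partial_z u\|_{\mathrm{II}}\lesssim\|f\|_{\mathrm{II}}$, is simply false: the two endpoints cannot be proved separately for this term.

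What saves the estimate is that the interpolation norm is an infimum over decompositions of the \emph{output}, not the \emph{input}; the decomposition of $\nabla'\partial_z u$ need not mirror a decomposition of $f$. This is what the paper exploits via the Neumann splitting $u=u_N+u_C$, where $u_N$ solves the heat equation with Neumann data $\partial_z u_N|_{z=0}=0$ and $u_C$ is the caloric function carrying the boundary trace $\partial_z u_C|_{z=0}=\partial_z u|_{z=0}$. Now $\partial_z u_N$ \emph{does} vanish at $z=0$ (the even-reflected kernel's $z$-derivative is odd), so its weighted norm is controllable (the paper's (\ref{A2.1})); meanwhile $u_C$ is estimated in the \emph{sup} endpoint via a trace estimate (the paper's (\ref{A1.2}) and (\ref{A3.1})), which costs one $L^\infty_z L^1_{t,x'}$ piece in the interpolation but keeps the final bound against $\|f\|_{(0,\infty)}$. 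Without this further decomposition the argument for the mixed derivative does not close.
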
       
               
\vspace{2cm}

\begin{proof} [Proof of Proposition \ref{prop3}]\ \\
 
\begin{enumerate}

 \item 

Subtracting the quantity $(\partial_z-(-\Delta')^{\frac{1}{2}})(f^z+\partial_z\rho)$
 from both sides of equation (\ref{FraBack})
 and then multiplying the new equation by $(-\Delta)^{-\frac{1}{2}}$ we get

\begin{eqnarray*}
&&(\partial_z-(-\Delta')^{\frac{1}{2}})(-\Delta')^{-\frac{1}{2}}(\phi-f^z-\partial_z\rho)\\
&=&\nabla'\cdot(-\Delta')^{-\frac{1}{2}} f'+f^z-(-\Delta')^{-\frac{1}{2}}\partial_t\rho+\partial_z \rho-(-\Delta')^{\frac{1}{2}}\rho\,.
\end{eqnarray*}

From the basic estimate (\ref{1}) we obtain

\begin{eqnarray*}
&&||\nabla'(-\Delta')^{-\frac{1}{2}}(\phi-f^z-\partial_z\rho)||_{(0,\infty)}\lesssim ||\nabla'\cdot(-\Delta')^{-\frac{1}{2}} f'||_{(0,\infty)}\\
&+&||f^z||_{(0,\infty)}+||(-\Delta')^{-\frac{1}{2}}\partial_t\rho||_{(0,\infty)}+||\partial_z \rho||_{(0,\infty)}+||(-\Delta')^{\frac{1}{2}}\rho||_{(0,\infty)}\,.
\end{eqnarray*}

Thanks to the bandedness assumption in the form of  (\ref{P})  and (\ref{Q}) we have
\begin{eqnarray*}
&&||\phi-f^z-\partial_z\rho||_{(0,\infty)}\\
&\lesssim& || f'||_{(0,\infty)}+||f^z||_{(0,\infty)}+||(-\Delta')^{-\frac{1}{2}}\partial_t\rho||_{(0,\infty)}+||\partial_z \rho||_{(0,\infty)}+||\nabla'\rho||_{(0,\infty)}
\end{eqnarray*}
and from this we obtain easily the desired estimate (\ref{A}).
\item 

After multiplying the equation  (\ref{Heat1}) by $(-\Delta')^{-\frac{1}{2}}$, 
the application of (\ref{4}) to $(-\Delta')^{-\frac{1}{2}}v^z$ yields

\begin{eqnarray*}
&&||(-\Delta')^{-\frac{1}{2}}(\partial_t-\partial_z^2)v^z||_{(0,\infty)}+||(-\Delta')^{-\frac{1}{2}}\nabla'\nabla v^z||_{(0,\infty)}\\
&\lesssim& ||f^z||_{(0,\infty)}+||\phi||_{(0,\infty)}+||\nabla'\cdot(-\Delta')^{-\frac{1}{2}}f'||_{(0,\infty)}\\
&+&||(-\Delta')^{-\frac{1}{2}}(\partial_t-\partial_z^2)\rho||_{(0,\infty)}+||(-\Delta')^{\frac{1}{2}}\rho||_{(0,\infty)}\,.
\end{eqnarray*}

The estimate (\ref{B}) follows after observing (\ref{Q}) and applying the triangle inequality to the second to last
 term on the right hand side.
 
\item

We need to estimate the the three terms on the right hand side of (\ref{C}) separately.
We start with the term $\nabla^2 u^z$:
 since $||\nabla^2 u^z||_{(0,\infty)}\leq||\nabla'\nabla u^z||_{(0,\infty)}+||\partial_z^2 u^z||_{(0,\infty)}$, 
we tackle the term $\nabla'\nabla u^z$ and $\partial_z^2 u^z$ separately.
First multiply by $\nabla'$ the equation (\ref{FraFor}).  An application
 of the estimate (\ref{2}) to $\nabla' u^z$ yields
\begin{equation}\label{G}||\nabla\nabla'u^z||_{(0,\infty)}\lesssim ||\nabla' v^z||_{(0,\infty)} .\end{equation}
Now multiplying the equation (\ref{FraFor}) by $\partial_z^2$ 
\begin{equation}\label{E}\partial_z^2 u^z=-(-\Delta')^{\frac{1}{2}}\partial_z u^z+\partial_z v^z=-\Delta'u^z-(-\Delta')^{\frac{1}{2}}v^z+\partial_z v^z\end{equation}
and using the bandedness assumption in the form (\ref{Q}) we have 
\begin{equation}\label{i}
\begin{array}{rclc}
||\partial_z^2 u^z||_{(0,\infty)}&\leq& ||\nabla'^2 u^z||_{(0,\infty)}+||\nabla v^z||_{(0,\infty)}\\
&\stackrel{(\ref{G})}{\leq}&||\nabla v^z||_{(0,\infty)}\,.
\end{array}
\end{equation}

 The second term  of (\ref{C}), i.e $(-\Delta')^{-\frac{1}{2}}\partial_z(\partial_t-\partial_z^2)u^z$, can be bounded in the following way:
We multiply the equation  (\ref{FraFor}) by $(-\Delta')^{-\frac{1}{2}}(\partial_t-\partial_z^2)$

\begin{equation*}
    \left\{\begin{array}{rclc}
(\partial_z+(-\Delta')^{\frac{1}{2}})(-\Delta')^{-\frac{1}{2}}(\partial_t-\partial_z^2)u^z&=& (-\Delta')^{-\frac{1}{2}}(\partial_t-\partial_z^2)v^z \qquad & {\rm for } \quad   z>0,\\
(-\Delta')^{-\frac{1}{2}}(\partial_t-\partial_z^2)u^z&=&(-\Delta')^{-\frac{1}{2}}\partial_zv^z \qquad & {\rm for } \quad   z=0,
    \end{array}\right.
  \end{equation*}

where we have used  that at $z=0$

$$(\partial_t-\partial_z^2)u^z=-\partial_z^2 u^z\stackrel{(\ref{E})}{=}\partial_zv^z.$$
Applying (\ref{3}) to $(-\Delta')^{-\frac{1}{2}}(\partial_t-\partial_z^2)u^z$ and using the bandedness assumption in the form of (\ref{P}),

\begin{equation}\label{F}
||\nabla(-\Delta')^{-\frac{1}{2}}(\partial_t-\partial_z^2)u^z||_{(0,\infty)}\lesssim ||(-\Delta')^{-\frac{1}{2}}(\partial_t-\partial_z^2)v^z||_{(0,\infty)}+||\partial_zv^z||_{(0,\infty)}\,.
\end{equation}
%
%
%

Finally we can bound the last term of (\ref{C}), i.e $\partial_t u^z$:
We observe that $\partial_t u^z=(\partial_t -\partial_z^2 )u^z+\partial_z^2 u^z$ thus 

\begin{equation}\label{H}
||\partial_t u^z||_{(0,\infty)}\leq ||(\partial_t -\partial_z^2 )u^z||_{(0,\infty)}+||\partial_z^2 u^z||_{(0,\infty)}\,.
\end{equation}

For the first term in the right hand side of (\ref{H})
we notice that 
\begin{eqnarray*}
||(\partial_t-\partial_z^2)u^z||_{(0,\infty)}
&\stackrel{(\ref{P})}{\leq}&||(-\Delta')^{-\frac{1}{2}}\nabla'(\partial_t-\partial_z^2)u^z||_{(0,\infty)}\\
&\stackrel{(\ref{F})}{\lesssim}&||(-\Delta')^{-\frac{1}{2}}(\partial_t-\partial_z^2)v^z||_{(0,\infty)}+||\partial_z v^z||_{(0,\infty)}\\
&\lesssim&||(-\Delta')^{-\frac{1}{2}}(\partial_t-\partial_z^2)v^z||_{(0,\infty)}+||\nabla v^z||_{(0,\infty)}\,.
\end{eqnarray*}
The  second term  on the right hand side of (\ref{H}) is bounded in (\ref{i}). 
Thus we have the following bound for $\partial_t u$
\begin{equation}\label{M}||\partial_t u^z||_{(0,\infty)}\leq ||(-\Delta')^{-\frac{1}{2}}(\partial_t-\partial_z^2)v^z||_{(0,\infty)}+||\nabla v^z||_{(0,\infty)}\,.\end{equation}

Putting together all the above we obtain the desired estimate.
%
 
\item From the defining equation (\ref{Heat2}), the  basic estimate (\ref{4})
 and the bandedness assumption in form of (\ref{R}), we get

$$||(\partial_t-\partial_z^2)v'||_{(0,\infty)}+||\nabla'\nabla v'||_{(0,\infty)}\lesssim ||f'||_{(0,\infty)}\,.$$


\end{enumerate}
\end{proof}

\subsection{Proof of Theorem \ref{th1}}\label{ProofTh1}

Let $u,p,f$ be the solutions of the non-stationary Stokes equations in the strip $0<z<1$ (\ref{STOKES-STRIP}).
Then $\tilde u=\eta u,\tilde p=\eta p$ (with $\eta$ defined in (\ref{cutoff}) satisfy (\ref{UHS}), namely
 \begin{equation*}
  \left\{\begin{array}{rclc}
      \partial_t \tilde u-\Delta \tilde u+\nabla \tilde p &=& \tilde f \qquad & {\rm for } \quad z>0\,,\\
        \nabla\cdot \tilde u &=& \tilde\rho \qquad & {\rm for } \quad z>0 \,,\\
        \tilde u &=&  0 \qquad & {\rm for } \quad z=0\,,\\
         \tilde u &=& 0  \qquad  & {\rm for } \quad t=0 \,,\\
         \end{array}\right.        
 \end{equation*}
where 
\begin{equation}\label{Defi-bis}
\tilde f:=\eta f-2(\partial_z \eta)\partial_z u-(\partial_z^2\eta )u+(\partial_z\eta )pe_z, \qquad \qquad  \tilde\rho:=(\partial_z\eta )u^z\,.
\end{equation}
Since, by assumption $f,\rho$  are horizontally band-limited , 
then also $\tilde f$ and $\tilde \rho$  satisfy the horizontal bandedness assumption (\ref{BC1}) and (\ref{BC2}) respectively.
We can therefore apply Proposition \ref{pr1} to the upper half space problem (\ref{UHS}) and get
\begin{eqnarray*}
&&||(\partial_t -\partial_z^2)\tilde{u}'||_{(0,\infty)}+ ||\nabla'\nabla \tilde{u}'||_{(0,\infty)}+||\partial_t \tilde{u}^z||_{(0,\infty)}+||\nabla^2 \tilde{u}^z||_{(0,\infty)}+||\nabla \tilde{p}||_{(0,\infty)}\\
&\lesssim&||\tilde{f}||_{(0,\infty)}+||(-\Delta')^{-\frac{1}{2}}\partial_t \tilde{\rho}||_{(0,\infty)}+||(-\Delta')^{-\frac{1}{2}}\partial_z^2 \tilde{\rho} ||_{(0,\infty)}+||\nabla \tilde{\rho}||_{(0,\infty)}\,.
\end{eqnarray*}
By symmetry, we also have the same maximal regularity estimates in the lower half space. Indeed, let
 $\tilde{\tilde u} ,\tilde{\tilde p}$ satisfy the equation 
 \begin{equation}\label{LHS}
  \left\{\begin{array}{rclc}
      \partial_t \tilde{\tilde u}-\Delta \tilde{\tilde u}+\nabla \tilde {\tilde p} &=& \tilde{\tilde{f}} \qquad & {\rm for } \quad z<1\,,\\
        \nabla\cdot \tilde {\tilde u} &=& \tilde{\tilde\rho} \qquad & {\rm for } \quad z<1 \,,\\
        \tilde {\tilde u} &=&  0 \qquad & {\rm for } \quad z=1\,,\\
         \tilde {\tilde u} &=& 0  \qquad  & {\rm for } \quad t=0 \,,\\
         \end{array}\right.        
 \end{equation}
where 
\begin{equation}\label{Defi2}
\tilde{\tilde f}:=(1-\eta) f-2(\partial_z (1-\eta))\partial_z u-(\partial_z^2(1-\eta) )u+(\partial_z(1-\eta) )pe_z, \qquad   \tilde{\tilde\rho}:=(\partial_z(1-\eta) )u^z\,.
\end{equation}
Again by Proposition \ref{prop3} we have 
\begin{eqnarray*}
&&||(\partial_t -\partial_z^2)\tilde{\tilde{u}}'||_{(-\infty,1)}+ ||\nabla'\nabla \tilde{\tilde{u}}'||_{(-\infty,1)}+ ||\partial_t \tilde{\tilde{u}}^z||_{(-\infty,1)}+||\nabla^2 \tilde{\tilde{u}}^z||_{(-\infty,1)}+||\nabla \tilde{\tilde p}||_{(-\infty,1)}\\
&\lesssim&||\tilde{\tilde{f}}||_{(-\infty,1)}+||(-\Delta')^{-\frac{1}{2}}\partial_t \tilde{\tilde{\rho}}||_{(-\infty,1)}+||(-\Delta')^{-\frac{1}{2}}\partial_z^2 \tilde{\tilde{\rho}} ||_{(-\infty,1)}+||\nabla \tilde{\tilde{\rho}}||_{(-\infty,1)},
\end{eqnarray*}
where $||\cdot||_{(-\infty,1)}$ is the analogue of (\ref{NORM-HALF}) (see Section (\ref{notations}) for notations).
Since $u=\tilde u+\tilde{\tilde u}$ in the strip $[0,L)^{d-1}\times (0,1)$, by the triangle inequality and using the maximal regularity estimates above, we get
\begin{eqnarray*}
&&||(\partial_t -\partial_z^2)u'||_{(0,1)}+||\nabla'\nabla u'||_{(0,1)}+||\partial_t u^z||_{(0,1)}+||\nabla^2 u^z||_{(0,1)}+||\nabla p||_{(0,1)}\\
&\lesssim&||(\partial_t -\partial_z^2)\tilde{u}'||_{(0,\infty)}+||(\partial_t -\partial_z^2)\tilde{\tilde{u}}'||_{(-\infty,1)}+||\nabla'\nabla \tilde{u}'||_{(0,\infty)}+||\nabla'\nabla \tilde{\tilde{u}}'||_{(-\infty,1)}\\
&+&||\partial_t \tilde{u}^z||_{(0,\infty)}+ ||\partial_t \tilde{\tilde{u}}^z||_{(-\infty,1)}+ ||\nabla^2 \tilde{u}^z||_{(0,\infty)}+ ||\nabla^2 \tilde{\tilde{u}}^z||_{(-\infty,1)}\\
&+&||\nabla \tilde{p}||_{(0,\infty)}+||\nabla \tilde{\tilde p}||_{(-\infty,1)}\\
&\lesssim&||\tilde{f}||_{(0,\infty)}+||\tilde{\tilde{f}}||_{(-\infty,1)}+||(-\Delta')^{-\frac{1}{2}}\partial_t \tilde{\rho}||_{(0,\infty)}+||(-\Delta')^{-\frac{1}{2}}\partial_t \tilde{\tilde{\rho}}||_{(-\infty,1)}\\
&+&||(-\Delta')^{-\frac{1}{2}}\partial_z^2 \tilde{\rho} ||_{(0,\infty)}+||(-\Delta')^{-\frac{1}{2}}\partial_z^2 \tilde{\tilde{\rho}} ||_{(-\infty,1)}+||\nabla \tilde{\rho}||_{(0,\infty)}+||\nabla \tilde{\tilde{\rho}}||_{(-\infty,1)}\,.
\end{eqnarray*}
By the definitions of $\tilde{f}$ and $\tilde{\tilde{f}}$ we get

$$||\tilde{f}||_{(0,\infty)}+||\tilde{\tilde{f}}||_{(-\infty,1)}\lesssim ||f||_{(0,1)}+||\partial_z u||_{(0,1)}+||u||_{(0,1)}+||p||_{(0,1)}$$
and similarly for $\tilde{\rho}$ and $\tilde{\tilde{\rho}}$ we have
$$||\nabla \tilde{\rho}||_{(0,\infty)}+||\nabla \tilde{\tilde{\rho}}||_{(-\infty,1)}\lesssim ||\nabla u||_{(0,1)}+||u||_{(0,1)}$$
$$ ||(-\Delta')^{-\frac{1}{2}}\partial_t \tilde{\rho}||_{(0,\infty)}+||(-\Delta')^{-\frac{1}{2}}\partial_t \tilde{\tilde{\rho}}||_{(-\infty,1)}\lesssim || (-\Delta')^{-\frac{1}{2}}\partial_t u||_{(0,1)} $$
and 
\begin{eqnarray*}
&&||(-\Delta')^{-\frac{1}{2}}\partial_z^2 \tilde{\rho} ||_{(0,\infty)}+||(-\Delta')^{-\frac{1}{2}}\partial_z^2 \tilde{\tilde{\rho}} ||_{(-\infty,1)} \\
&\lesssim&||(-\Delta')^{-\frac{1}{2}}u^z||_{(0,1)}+||(-\Delta')^{-\frac{1}{2}}\partial_zu^z||_{(0,1)}+||(-\Delta')^{-\frac{1}{2}}\partial^2_zu^z||_{(0,1)}\,.
\end{eqnarray*}
Therefore, collecting the estimates, we have 
\begin{eqnarray*}
&&||(\partial_t -\partial_z^2)u'||_{(0,1)}+||\nabla'\nabla u'||_{(0,1)}+||\partial_t u^z||_{(0,1)}+||\nabla^2 u^z||_{(0,1)}+||\nabla p||_{(0,1)}\\
&\lesssim&||f||_{(0,1)}+||p||_{(0,1)}+||\nabla u||_{(0,1)}+||u||_{(0,1)}\\
&+& || (-\Delta')^{-\frac{1}{2}}\partial_t u||_{(0,1)}+||(-\Delta')^{-\frac{1}{2}}u^z||_{(0,1)}+||(-\Delta')^{-\frac{1}{2}}\partial_zu^z||_{(0,1)}+||(-\Delta')^{-\frac{1}{2}}\partial^2_zu^z||_{(0,1)}\,.
\end{eqnarray*}
Incorporating the horizontal bandedness assumption we find
 \begin{eqnarray*}
 ||\partial_z u||_{(0,1)}&\leq& R  ||\nabla'\partial_z u||_{(0,1)}\,,\\
 ||u||_{(0,1)}&\leq& R^2||(\nabla')^2 u||_{(0,1)} \,, \\    
 ||p||_{(0,1)}&\leq& R||\nabla'p||_{(0,1)} ,\\
 ||\nabla u||_{(0,1)}&\leq& R||\nabla'\nabla u||_{(0,1)},\\
 || (-\Delta')^{-\frac{1}{2}}\partial_t u||_{(0,1)}&\leq& R || \partial_t u||_{(0,1)}\,,\\
 ||(-\Delta')^{-\frac{1}{2}}u^z||_{(0,1)}&\leq&  R^3||\nabla'^2u^z||_{(0,1)}\,,\\
 ||(-\Delta')^{-\frac{1}{2}}\partial_zu^z||_{(0,1)}&\leq& R^2 ||\nabla'\partial_zu^z||_{(0,1)}\,,\\
 ||(-\Delta')^{-\frac{1}{2}}\partial^2_zu^z||_{(0,1)}&\leq& R||\partial^2_zu^z||_{(0,1)} \,.      
 \end{eqnarray*}               
Thus, for $R<R_0$ where $R_0$ is sufficiently small, all the terms in the right hand side,
except $f$ can be absorbed into the left hand side and the conclusion follows.

\section{Proof of main technical lemmas}\label{tec}

\begin{remark}\label{TIME}
 In the proof of Lemma \ref{lemma1}, Lemma \ref{lemma2} and Lemma \ref{lemma3} we  will derive inequalities between quantities  where $t$ is integrated between $0$ and $\infty$. 
  From the proof it is clear that the same inequalities are true  with $t$ integrated between $0$ and $t_0$ with constants that
  are not depending on $t_0$. Therefore
  dividing by $t_0$ and taking $\limsup_{t_{0}\rightarrow \infty}$ (see (\ref{LTaHA})) we shall obtain the desired estimates in terms of the interpolation norm (\ref{NORM-HALF}).
\end{remark}

\subsection{Proof of Lemma \ref{lemma1}}

\begin{proof}[Proof of Lemma \ref{lemma1}]\ \\
	      In order to simplify the notations, in what follows we will omit
	      the dependency of the functions from the time variable.	      
              It is enough to show 
	      \begin{equation*}
	       \label{U}||\nabla' u||_{(0,\infty)}\lesssim ||f||_{(0,\infty)},
	      \end{equation*}
              since, by equation (\ref{I}) $\partial_z u=(-\Delta')^{\frac{1}{2}} u+f$.
              We claim that, in order to prove (\ref{U}), it is enough to show
              \begin{equation}\label{V} 
	       \sup_z\langle|\nabla' u|\rangle'\lesssim \sup_z \langle|f|\rangle'
	      \end{equation}
              and 
              \begin{equation}\label{Z} 
	       ||\nabla'u||_{(0,\infty)}\lesssim \int\langle|f|\rangle'\frac{dz}{z}\,.
	      \end{equation}
               Indeed, by definition of the norm $||\cdot||_{(0,\infty)}$ (see (\ref{NORM-HALF}))
               if we select an arbitrary decomposition $\nabla'u=\nabla'u_1+\nabla'u_2$, where $u_1$ and $u_2$ are 
               solutions of the problem (\ref{I}) with right hand sides $f_1$ and $f_2$ respectively, we have 
              
              \begin{eqnarray*}
               ||\nabla' u||_{(0,\infty)}&\leq& ||\nabla' u_1||_{(0,\infty)}+\sup_{z}\langle|\nabla' u_2|\rangle'\\
               &\leq& \int\langle| f_1|\rangle'\frac{dz}{z}+\sup_z \langle|f_2|\rangle'\,.\\
              \end{eqnarray*}                           
               Passing to the infimum over all the decompositions of $f$ we obtain
               $$||\nabla' u||_{(0,\infty)}\lesssim ||f||_{(0,\infty)}.$$
              We recall that by Duhamel's principle we have the following representation
              \begin{equation}\label{DU}
              u(x',z)=\int_z^{\infty} u_{x',z_0}(z)dz_0,
              \end{equation}
              where $u_{z_0}$  is the harmonic extension of $f(\cdot,z_0)$ onto $\{z<z_0\}$, i.e it solves the boundary value problem 

              \begin{equation}\label{FraBackDu}
                \left\{\begin{array}{rclc}
                      (\partial_z-(-\Delta')^{\frac{1}{2}}) u_{z_0}&=&0  \qquad & {\rm for } \quad z<z_0\,,\\
                       u_{z_0}&=&f  \qquad & {\rm for } \quad z=z_0\,.\\
                       \end{array}\right.        
               \end{equation}                
                     \underline{Argument for (\ref{V}):} 
                     \newline
                     Using the representation of the solution of (\ref{FraBackDu}) via the Poisson kernel, i.e
                     $$u_{z_0}(x',z)=\int\frac{z_0-z}{(|x'-y'|^2+(z_0-z)^2)^{\frac{d}{2}}}f(x',z_0) dy'$$
                     we obtain the following bounds
                     \begin{equation}\label{AA}
                     \langle|\nabla' u_{z_0}(\cdot,z)|\rangle'\lesssim 
                     \left\{\begin{array}{lll}&&\langle|\nabla' f(\cdot,z_0)|\rangle',\\                    
                      &\frac{1}{(z_0-z)}&\langle| f(\cdot,z_0)|\rangle',\\                                       
                      &\frac{1}{(z_0-z)^2}&\langle|\nabla'(-\Delta')^{-1}f(\cdot,z_0)|\rangle'.\\                                       
                      \end{array}\right.                                       
                      \end{equation}                                      
		     By using the bandedness assumption in the form of (\ref{BAND1}) and (\ref{BAND2}), we have   
		     $$\langle|\nabla' u_{z_0}(\cdot,z)|\rangle'\lesssim \min \left\{\frac{1}{R}, \frac{R}{(z_0-z)^2}\right\}\langle|f(\cdot,z_0)|\rangle',$$
		     hence		     
                     \begin{eqnarray*}
			\langle|\nabla' u(\cdot,z)|\rangle'&\lesssim& \int_z^{\infty}\min \left\{\frac{1}{R},\frac{R}{(z_0-z)^2}\right\}\langle|f(\cdot,z_0)|\rangle'dz_0\\
			&\lesssim&\sup_{z_0\in (0,\infty)}\langle|f(\cdot,z_0)|\rangle'\int_z^{\infty}\min \left\{\frac{1}{R},\frac{R}{(z_0-z)^2}\right\}dz_0\\
			&\lesssim&\sup_{z_0\in (0,\infty)}\langle|f(\cdot,z_0)|\rangle',
	             \end{eqnarray*}	            
                     which, passing to the supremum in $z$, implies (\ref{V}).
                     \newline
                    From the above and applying Fubini's rule, we also have
                     \begin{align*}                       
			\int_{0}^{\infty}\langle|\nabla' u(\cdot,z)|\rangle' dz&\leq \int_0^{\infty} \int_z^{\infty}\min \left\{\frac{1}{R},\frac{R}{(z_0-z)^2}\right\}\langle|f(\cdot,z_0)|\rangle'dz_0 dz\numberthis\label{UNW1}\\
			&\leq\int_0^{\infty} \int_0^{z_0}\min \left\{\frac{1}{R},\frac{R}{(z_0-z)^2}\right\}dz\langle|f(\cdot,z_0)|\rangle'dz_0 \nonumber\\
			&\lesssim\int_0^{\infty}\langle|f(\cdot,z)|\rangle' dz \nonumber \,.\\
                     \end{align*}                     
		     \underline{Argument for (\ref{Z}):}
		     \newline
                      Let us consider $\chi_{2H\leq z\leq 4H}f$ where $\chi_{2H\leq z\leq 4H}$ is the characteristic function
                      on the interval $[2H,4H]$ and let $u_H$ be the solution to
                      $$(\partial_z-(-\Delta')^{\frac{1}{2}})u_{H}=\chi_{2H\leq z\leq 4H}f.$$
		    We claim
                     \begin{equation}\label{X}
			\sup_{z\leq H}\langle|\nabla' u_{H}|\rangle'\leq \int_0^{\infty}\langle|\chi_{2H\leq z\leq 4H}f|\rangle'\frac{dz}{z}
                     \end{equation}                    
		     and		     
		     \begin{equation}\label{XX}
			\int_H^{\infty}\langle|\nabla' u_{H}|\rangle'\frac{dz}{z}\leq \int_0^{\infty}\langle|\chi_{2H\leq z\leq 4H}f|\rangle'\frac{dz}{z}\,.
                     \end{equation}                          
                      From estimate (\ref{X}) and (\ref{XX}) the statement (\ref{Z}) easily follow.
                      Indeed, choosing $H=2^{n-1}$ and summing up over the dyadic intervals, we have
                      \begin{eqnarray*}
                      ||\nabla'u||&\leq& \sum_{n\in \Z}||\nabla'u_{2^{n-1}}||_{(0,\infty)}\\
                      &\leq&\sup_{z\leq 2^{n-1}}\langle|\nabla' u_{2^{n-1}}|\rangle'+\int_{2^{n-1}}^{\infty}\langle|\nabla' u_{2^{n-1}}|\rangle'\frac{dz}{z}\\
                      &\leq&\sum_{n\in \Z} \int_0^{\infty}\langle| \chi_{2^n\leq z\leq 2^{n+1}}f|\rangle'\frac{dz}{z}\\
                      &=&\int_0^{\infty}\langle|f|\rangle'\frac{dz}{z}\,.
                      \end{eqnarray*}                         
                        Argument for (\ref{X}):  
                                Fix $z\leq H$. Then, we have
                                  \begin{eqnarray*}
				      \langle|\nabla'u_H|\rangle'&\stackrel{(\ref{AA})}{\leq}&\int_z^{\infty}\frac{1}{(z_0-z)}\langle|\chi_{2H\leq z\leq 4H}f(\cdot,z_0)|\rangle' dz_0\\
 				      &\lesssim&\int_{2H}^{4H}\frac{1}{(z_0-z)}\langle|\chi_{2H\leq z\leq 4H}f(\cdot,z_0)|\rangle' dz_0\\ 
 				      &\lesssim& \frac{1}{H}\int_{2H}^{4H}\langle|\chi_{2H\leq z\leq 4H}f(\cdot,z_0)|\rangle' dz_0\\                                 
 				      &\leq&\int_{2H}^{\infty}\langle|\chi_{2H\leq z\leq 4H}f(\cdot,z_0)|\rangle' \frac{dz_0}{z_0}\\
 				      &\leq&\int_{0}^{\infty}\langle|\chi_{2H\leq z\leq 4H}f(\cdot,z_0)|\rangle' \frac{dz_0}{z_0}\,.\\
                                  \end{eqnarray*}                                
                                  Taking the supremum over all $z$ proves (\ref{X}). 
                                  \newline
                                 Argument for (\ref{XX}):
                                  For $z\geq H$ we have
                                 \begin{eqnarray*}
                                  \int_H^{\infty}\langle|\nabla'u_H|\rangle'\frac{dz}{z}&\lesssim& \frac{1}{H}\int_0^{\infty}\langle|\nabla'u_H|\rangle' dz\\
                                  &\stackrel{(\ref{UNW1})}{\lesssim}&\frac{1}{H}\int_0^{\infty}\langle|\chi_{2H\leq z\leq 4H}f|\rangle' dz\\
                                  &=&\frac{1}{H}\int_{2H}^{4H}\langle|\chi_{2H\leq z\leq 4H}f|\rangle' dz\\
                                  &\lesssim& \int_0^{\infty}\langle|\chi_{2H\leq z\leq 4H}f|\rangle'\frac{dz}{z}\,.\\
                                 \end{eqnarray*}
\end{proof}

\subsection{Proof of Lemma \ref{lemma2}}

\begin{proof}[Proof of Lemma \ref{lemma2}]\ \\		      
                     Let us first assume $g=0$. It is enough to show            	             
	             \begin{equation}\label{SupEst}
	             \sup_z\langle|\nabla'u|\rangle'\lesssim \sup_{z}\langle|f|\rangle'
	             \end{equation}	             
	             and	             
	             \begin{equation}\label{WeiEst}
	             \int_0^{\infty}\langle|\nabla'u|\rangle'\frac{dz}{z}\lesssim \int_{0}^{\infty}\langle|f|\rangle'\frac{dz}{z}\,.
	             \end{equation}	             
	             Recall that by Duhamel's principle we have the following representation
                     \begin{equation}\label{Du}
                      u(z)=\int_0^z u_{z_0}(\cdot,z)dz_0,
                     \end{equation}
                     where $u_{z_0}$  is the harmonic extension of $f(z_0)$ onto $\{z>z_0\}$, i.e it solves the boundary value problem         
                      \begin{equation}\label{FraForDu}
                        \left\{\begin{array}{rclc}
			  (\partial_z+(-\Delta')^{\frac{1}{2}}) u_{z_0}&=&0 \qquad & {\rm for } \quad z>z_0\,,\\
			   u_{z_0}&=&f \qquad & {\rm for } \quad z=z_0\,.\\
		           \end{array}\right.        
                      \end{equation}                   
                    From the Poisson's kernel representation we learn that 
                    $$\langle|\nabla' u_{z_0}(\cdot,z)|\rangle'\lesssim \begin{cases}
                                                                  \langle|\nabla' f(\cdot,z_0)|\rangle'\,,\\
                                                                  \frac{1}{(z-z_0)^2}\langle|\nabla'(-\Delta')^{-1}f(\cdot,z_0)|\rangle'\,.
                                                                  \end{cases}$$
                    Using the bandedness assumption in the form of (\ref{BAND1}) and (\ref{BAND2}) 
                    $$\langle|\nabla' u_{z_0}(\cdot,z)|\rangle'\lesssim \min\left\{\frac{1}{R},\frac{R}{(z-z_0)^2}\right\}\langle| f(\cdot,z_0)|\rangle'$$
                    and observing  (\ref{Du}), we obtain 
                    \begin{equation}\label{AUX} 
                    \begin{array}{rclc}     
                    \langle|\nabla' u(\cdot,z)|\rangle'                               
		      &\lesssim& \int_0^{z}\min\left\{\frac{1}{R},\frac{R}{(z-z_0)^2}\right\}\langle| f(\cdot,z_0)|\rangle' dz_0\\
		       &\leq&\sup_{z_0}\langle| f(\cdot,z_0)|\rangle'\int_0^{z}\min\left\{\frac{1}{R},\frac{R}{(z-z_0)^2}\right\} dz_0\\
		       &\lesssim&\sup_{z_0}\langle| f(\cdot,z_0)|\rangle'\,. 
                    \end{array}    
                    \end{equation}                                                       
                    Estimate (\ref{SupEst}) follows from (\ref{AUX}) by passing to the supremum in $z$.     
                    \newline
                    From the above (\ref{AUX}), multiplying by the weight $\frac{1}{z}$ and observing that $z>z_0$ we have                    
                   \begin{equation}\label{AUX2}
		      \langle|\nabla' u(\cdot,z)|\rangle'\frac{1}{z}\lesssim \int_0^{z}\min\left\{\frac{1}{R},\frac{R}{(z-z_0)^2}\right\}\langle| f(\cdot,z_0)|\rangle'\frac{dz_0}{z_0}\,.
                    \end{equation}                   
                    After integrating in $z\in (0,\infty)$ and applying Young's estimate  we get (\ref{WeiEst}).                                      
		     \vspace{1cm}
		     
                     Let's assume now the general case, with $g\neq 0$. We want to prove (\ref{3}). 
                     Recall that by definition $\tilde g(x',z):=g(x')$ and consider $u-\tilde g$. 
                     By construction it satisfies                                                 
                     \begin{equation*}
                        \left\{\begin{array}{rclc}
			  (\partial_z+(-\Delta')^{-\frac{1}{2}})(u-\tilde g)&=&f-(-\Delta')^{-\frac{1}{2}}g  \qquad & {\rm for } \quad z>0\,,\\
			  u-\tilde g&=&0   \qquad & {\rm for } \quad z=0 \,.\\ 
			     \end{array}\right.        
                      \end{equation*}                 
                    Using the first part of the proof of (\ref{2}) and triangle inequality, we have                  
                    $$||\nabla u||_{(0,\infty)}\lesssim ||\nabla \tilde g||_{(0,\infty)}+||f||_{(0,\infty)}+||(-\Delta')^{\frac{1}{2}}\tilde g||_{(0,\infty)}\,.$$                    
                    Therefore by the bandedness assumption in the form of (\ref{Q}) we can conclude (\ref{3}).                   
  \end{proof}   
                 
 \subsection{Proof of Lemma \ref{lemma3}}

\begin{proof}[Proof of Lemma \ref{lemma3}]\ \\         
		  We will show that, for the non-homogeneous heat 
		  equation with Dirichlet boundary condition   
		    \begin{equation}\label{A1}
                        \left\{\begin{array}{rclc}
			(\partial_t-\Delta)u&=&f  \qquad & {\rm for } \quad   z>0\,,\\
			u&=&0  \qquad & {\rm for } \quad    z=0\,,\\
			u&=&0  \qquad & {\rm for } \quad    t=0\,,\\
		     	 \end{array}\right.        
                      \end{equation}   
                      we have the following estimates
                    \begin{equation}\label{A1.1}
		     \left\langle\int|(\partial_t-\partial_z^2)u(\cdot,z,\cdot)|\,\frac{dz}{z}\right\rangle+\left\langle\int|\nabla'^2 u(\cdot,z,\cdot)|\frac{dz}{z}\right\rangle\lesssim \left\langle\int|f(\cdot,z,\cdot)| \frac{dz}{z}\right\rangle\,,
		    \end{equation}             
		   \begin{equation}\label{A1.2}
		    \left\langle|\nabla'\partial_z u(\cdot,z,\cdot)|_{z=0}\right\rangle\lesssim \left\langle\int|f(\cdot,z,\cdot)| \frac{dz}{z}\right\rangle\,,
		   \end{equation} 
		   \begin{equation}\label{A1.3}
		    \left\langle\sup_{z}|\nabla'^2 u(\cdot,z,\cdot)|\right\rangle\lesssim \left\langle\sup_{z}|f(\cdot,z,\cdot)|\right\rangle \,,
		   \end{equation}               
		   \begin{equation}\label{A1.4}
		    \left\langle\sup_{z}|\nabla'\partial_z u(\cdot,z,\cdot)|\right\rangle\lesssim \left\langle\sup_{z}|f(\cdot,z,\cdot)|\right\rangle \,.
		   \end{equation} 
		   In order to bound the off-diagonal components 
		   of the Hessian, we consider the 
		   decomposition
		   \begin{equation}\label{SPLIT}
		    u=u_N+u_C,
		   \end{equation} 
		 where $u_N$ solves   
		  \begin{equation}\label{A2}
                        \left\{\begin{array}{rclc}
		        (\partial_t-\Delta)u_N &=&f \qquad & {\rm for } \quad   z>0\,,\\
		        \partial_z u_N &=& 0 \qquad & {\rm for } \quad    z=0\,,\\
		          u_N &=& 0 \qquad & {\rm for } \quad    t=0\,,\\
		        \end{array}\right.        
                   \end{equation}
                     and $u_C$ solves
		   \begin{equation}\label{A3}
                        \left\{\begin{array}{rclc}
		        (\partial_t-\Delta)u_C&=&0   \qquad & {\rm for } \quad   z>0\,,\\
		        \partial_z u_C &=&\partial_z u  \qquad & {\rm for } \quad    z=0\,,\\
		        u_C &=&0  \qquad & {\rm for } \quad    t=0\,.\\
		        \end{array}\right.        
                   \end{equation}                       		       
		The splitting (\ref{SPLIT}) is valid by the uniqueness of the Neumann problem.
                For the auxiliary problems  (\ref{A2}) and (\ref{A3}) we have  the following bounds      		               
		\begin{equation}\label{A2.1}
		 \left\langle \int|\nabla'\partial_z u_N(\cdot,z,\cdot)|\frac{dz}{z}\right\rangle \lesssim \left\langle\int|f(\cdot,z,\cdot)| \frac{dz}{z}\right\rangle\,,
		\end{equation}       
       		\begin{equation}\label{A3.1}
		  \left\langle\sup_z|\nabla'\partial_z u_C(\cdot,z,\cdot)|\right\rangle\lesssim \left\langle|\nabla'\partial_z u(\cdot,z,\cdot)|_{z=0}\right\rangle \,.
		\end{equation}       
		We claim that estimates (\ref{A1.1}), (\ref{A1.2}),(\ref{A1.3}), (\ref{A1.4}), (\ref{A2.1}) and (\ref{A3.1})  yield (\ref{4}). 
		 \newline
		Let us first consider the bound for $\nabla'^2$. 
		Consider $u=u_1+u_2$, where $u_1$ and $u_2$ satisfy (\ref{A1}) with right hand side $f_1$ and $f_2$ respectively. 
		 We have	 
		\begin{eqnarray*}
		||\nabla'^2 u||_{(0,\infty)}&\lesssim &
		\left\langle\sup_z|\nabla'^2u_{1}|\right\rangle+\left\langle\int|\nabla'^2 u_{2}|\frac{dz}{z}\right\rangle\\
		& \stackrel{ (\ref{A1.1}) \& (\ref{A1.3})}{\lesssim} & \left\langle\sup_z|f_1|\right\rangle+\left\langle\int|f_2|\frac{dz}{z}\right\rangle,
		\end{eqnarray*}		
		which implies, upon taking infimum over all decompositions $f=f_1+f_2$
		\begin{equation}\label{AAA}
		 ||\nabla'^2 u||_{(0,\infty)}\lesssim ||f||_{(0,\infty)}.
		\end{equation}      
		 We now consider a further decomposition of $u_2$ , i.e $u_2=u_{2C}+u_{2N}$ where $u_{2C}$ satisfies (\ref{A3}) and $u_{2N}$ satisfies (\ref{A2}).
		 Therefore $u=u_1+u_{2C}+u_{2N}$ and we can bound the off-diagonal components of the Hessian 
                \begin{eqnarray*}
		||\nabla'\partial_z u||_{(0,\infty)}& \lesssim &
		 \left\langle\sup_z|\nabla'\partial_z u_1|\right\rangle+\left\langle\sup_z|\nabla'\partial_zu_{2C}|\right\rangle+\left\langle\int|\nabla'\partial_z u_{2N}|\frac{dz}{z}\right\rangle\\
		& \stackrel{(\ref{A1.2}),(\ref{A3.1}),(\ref{A2.1}) \& (\ref{A1.4})}{\lesssim} & \left\langle\sup_z|f_1|\right\rangle+\left\langle\int|f_2|\frac{dz}{z}\right\rangle\,.
		\end{eqnarray*}		
		From the last inequality, passing to the infimum over all the possible decompositions of $f$ we get
		\begin{equation}\label{BBB}
		||\nabla'\partial_z u||_{(0,\infty)}\lesssim ||f||_{(0,\infty)}.
		\end{equation}         		
		On one hand estimate (\ref{AAA}) and (\ref{BBB}) imply 
		$$||\nabla\nabla' u||_{(0,\infty)}\lesssim||\nabla'^2 u||_{(0,\infty)}+||\nabla'\partial_z u||_{(0,\infty)}\,,$$   		
		on the other hand equation (\ref{III}) and  estimate (\ref{AAA}) yield         
		$$||(\partial_t-\partial_z^2) u||_{(0,\infty)}\lesssim ||f||_{(0,\infty)}\,.$$		               
		   \underline{Argument for (\ref{A1.1})}
		   \newline
		   Let $u$ be a solution of problem of (\ref{A1}).
                   Keeping in mind Remark (\ref{TIME}) it is enough to show              
                   $$\int_0^{\infty}\int_0^{\infty}\langle|\nabla'^2u|\rangle'\frac{dz}{z}dt\lesssim \int_{0}^{\infty}\int_0^{\infty}\langle|f|\rangle'\frac{dz}{z}dt \,.$$    
		   By the Duhamel's principle we have     
		   \begin{equation}\label{Duhamel1}
                    u(x',z,t)=\int_{s=0}^{t} u_{s}(x',z,t)ds ,
                   \end{equation}
                   where $u_{s}$ is the solution to the homogeneous, initial value problem		   		  
	           \begin{equation}\label{Du-eq}	        
                   \left\{\begin{array}{rclc}
		   (\partial_t-\Delta)u_s&=&0  \qquad & {\rm for } \quad  z>0, t>s\,,\\
		    u_s&=&0  \qquad & {\rm for } \quad  z=0, t>s\,,\\
		    u_s&=&f  \qquad & {\rm for } \quad  z>0, t=s\,.\\
		    \end{array}\right.        
                   \end{equation}                                       
		 Extending $u$ and $f$ to the whole space by odd reflection 
		 \footnote{with abuse of notation we will call again $u$ and $f$ these extensions.},
		 we are left to study the problem       
		 \begin{equation*}
                 \left\{\begin{array}{rclc}
		 (\partial_t-\Delta)u_s&=&0 \qquad & {\rm for } \quad z\in \R, t>s\,,\\
		 u_s&=&f  \qquad & {\rm for } \quad z\in \R,  t=s\,,\\
	         \end{array}\right.        
                 \end{equation*}                    
		 the solution of which can be represented via heat kernel as     
                  \begin{equation}\label{representation}
		 \begin{array}{rclc}
		 u_s(x',z,t)&=&\int_{\R}\Gamma(\cdot,z-\tilde z, t-s)\ast_{x'}f(\cdot,\tilde z,s)d\tilde z\\
		 &=&\int_{0}^{\infty}\left[\Gamma(\cdot,z-\tilde z,t-s)-\Gamma ( \cdot,z+\tilde z,t-s)\right]\ast_{x'}f(\cdot,\tilde z,s)d\tilde z\,.\\
		 \end{array} 
                  \end{equation} 
		 The application of  $\nabla'^2$ to the representation above yields 
		 \begin{eqnarray*}
                  &&\nabla'^2u_s(x',z,t)\\
 		 &=&\footnotesize{\begin{cases}
 		     \int_{0}^{\infty}\int_{\R^{d-1}}\nabla'\Gamma_{d-1}(x'-\tilde{x'},t-s)\left(\Gamma_1(z-\tilde z,t-s)-\Gamma_1( z+\tilde z,t-s)\right)\nabla'f(\tilde{x'},\tilde z,s)d\tilde{x'}d\tilde z\,,\\
 		     \int_{0}^{\infty}\int_{\R^{d-1}}\nabla'^3\Gamma_{d-1}(x'-\tilde{x'},t-s)\left(\Gamma_1(z-\tilde z,t-s)-\Gamma_1 (z+\tilde z,t-s)\right)(-\Delta')^{-1}\nabla'f(\tilde{x'},\tilde z,s)d\tilde{x'}d\tilde z\,.\\
 		    \end{cases}}
 		    \end{eqnarray*}    		
 		Averaging in the horizontal direction we obtain, on the one hand
%
		{\footnotesize{
		 \begin{eqnarray*}
                  &&\langle|\nabla'^2u_s(\cdot,z,t)|\rangle'\\
                  &\lesssim&\int_{0}^{\infty}\langle|\nabla'\Gamma_{d-1} (\cdot,t-s)|\rangle'|\Gamma_1(z-\tilde z,t-s)-\Gamma_1 ( z+\tilde z,t-s)|\langle|\nabla'f(\cdot,\tilde z,s)|\rangle' d\tilde z\\
                  &\stackrel{(\ref{z0})\&(\ref{BAND2})}{\lesssim}&\int_{0}^{\infty}\frac{1}{(t-s)^{\frac{1}{2}}}|\Gamma_1(z-\tilde z,t-s)-\Gamma_1 ( z+\tilde z,t-s)|\frac{1}{R}\langle|f(\cdot,\tilde z,s)|\rangle' d\tilde z\\
		 \end{eqnarray*}}}
		 and, on the other hand
		  {\footnotesize{
		 \begin{eqnarray*}
                  &&\langle|\nabla'^2u_s(\cdot,z,t)|\rangle'\\
                   &\lesssim& \int_{0}^{\infty}\langle|\nabla'^3\Gamma_{d-1} (\cdot,t-s)\rangle'|\Gamma_1(z-\tilde z,t-s)-\Gamma_1( z+\tilde z,t-s)|\langle|(-\Delta')^{-1}\nabla'f(\cdot,\tilde z,s)|\rangle'd\tilde z\,\\
                   &\stackrel{(\ref{z0})\&(\ref{BAND1})}{\lesssim}&\int_{0}^{\infty}|\Gamma_1(z-\tilde z,t-s)-\Gamma_1( z+\tilde z,t-s)|\frac{1}{(t-s)^{\frac{3}{2}}} R\langle|f(\cdot,\tilde z,s)|\rangle'd\tilde z\,.
		 \end{eqnarray*}}}		 
		 Multiplying by the weight $\frac{1}{z}$ and integrating in $z\in(0,\infty)$ we get
		 \begin{equation*}
		\int_0^{\infty}\langle|\nabla'^2u_s(\cdot,t)|\rangle'\frac{dz}{z}\lesssim \left(\sup_{\tilde z}\int_0^{\infty}K_{t-s}(z,\tilde z) dz\right)
		 \footnotesize{\begin{cases}
		\frac{1}{(t-s)^{\frac{1}{2}}}\frac{1}{R}\int_{0}^{\infty}\langle|f(\cdot,\tilde z,s)|\rangle' \frac{d\tilde z}{\tilde z}\,,\\
		\frac{R}{(t-s)^{\frac{3}{2}}}\int_{0}^{\infty} \langle |f(x',\tilde z,s)|\rangle'\frac{d\tilde z}{\tilde z}\,,
		\end{cases}}
		\end{equation*}    
		where we called $K_{t-s}(z,\tilde z)=\frac{\tilde z}{z}|\Gamma_1(z-\tilde z,t-s)-\Gamma_1( z+\tilde z,t-s)|$.
		\newline
		 From Lemma \ref{Lemma5} we infer 
		 $$\sup_{\tilde z}\int_0^{\infty}K_{t-s}(z,\tilde z) dz\stackrel{(\ref{EE1})}{\lesssim} \int_{\R}|\Gamma_1(z,t-s)|dz+\sup_{z\in \R}(z^2|\partial_z\Gamma_1(z,t-s)|)\stackrel{(\ref{x1})\&(\ref{y3})}{\lesssim}1$$
                and therefore we have
		\begin{equation*}
		\int_0^{\infty}\langle|\nabla'^2u_s(\cdot,z,t)|\rangle'\frac{dz}{z}\lesssim
		 \footnotesize{\begin{cases}
		\frac{1}{(t-s)^{\frac{1}{2}}}\frac{1}{R}\int_{0}^{\infty}\langle|f(\cdot,\tilde z,s)|\rangle' \frac{d\tilde z}{\tilde z}\,,\\
		\frac{1}{(t-s)^{\frac{3}{2}}} R \int_{0}^{\infty}\langle |f(\cdot,\tilde z,s)|\rangle'\frac{d\tilde z}{\tilde z}\,.
		\end{cases}}
		\end{equation*}    
		Finally, inserting the previous estimate into the Duhamel formula (\ref{Duhamel1}) and integrating in time we get
               \begin{eqnarray}
               &&\int_0^{\infty}\langle|\nabla'^2 u(\cdot,z,t)|\rangle'\frac{dz}{z} dt \notag \\
               &\stackrel{(\ref{Duhamel1})}{\lesssim}&\int_0^{\infty}\int_0^t\langle|\nabla'^2 u_s(\cdot,z,t)|\rangle' \frac{dz}{z} ds dt\notag\\
               &\lesssim&\int_0^{\infty}\int_s^{\infty} \min\left\{\frac{1}{R(t-s)^{\frac{1}{2}}},\frac{R}{(t-s)^{\frac{3}{2}}}\right\}\int_0^{\infty}\langle|f(\cdot,\tilde z,s)|\rangle'\frac{d\tilde z}{\tilde z} dt ds\notag\\
               &\lesssim&\int_0^{\infty}\int_s^{\infty}\min\left\{\frac{1}{R(t-s)^{\frac{1}{2}}},\frac{R}{(t-s)^{\frac{3}{2}}}\right\}dt\int_0^{\infty}\langle|f(\cdot,\tilde z,s)|\rangle'\frac{d\tilde z}{\tilde z} ds \label{Es1}\\
               &\lesssim&\int_0^{\infty}\int_0^{\infty}\min\left\{\frac{1}{R\tau^{\frac{1}{2}}},\frac{R}{\tau^{\frac{3}{2}}}\right\}d\tau\int_0^{\infty}\langle|f(\cdot,\tilde z,s)|\rangle'\frac{d\tilde z}{\tilde z} ds\label{Es2},\\
               &\lesssim&\int_0^{\infty}\int_0^{\infty}\langle|f(\cdot,\tilde z,s)|\rangle'\frac{d\tilde z}{\tilde z} ds,\notag
               \end{eqnarray}
               where in the second to last inequality we used 
                \begin{equation}\label{MIN}
                \int_0^{\infty}\min\left\{\frac{1}{R\tau^{\frac{1}{2}}},\frac{R}{\tau^{\frac{3}{2}}}\right\}d\tau\lesssim 1\,.
                \end{equation}                 
		 \underline{Argument for (\ref{A1.2}):}  
		 \newline
		     Let $u$ be a solution of problem of (\ref{A1}). Recall that we need to prove
		  
		   \begin{equation}\label{BO1}
		      \int_0^{\infty}\langle|\nabla'\partial_z u|_{z=0}(\cdot,z,t)|\rangle' dt\lesssim \int_0^{\infty}\int_0^{\infty}\langle|f(\cdot,z,t)|\rangle' dt\frac{dz}{z}\,.
		   \end{equation}    		   
                  The solution of the equation (\ref{Du-eq}) extended to the whole space by odd reflection can be represented by (\ref{representation}) (see argument for (\ref{A1.1})). Therefore
		  \begin{eqnarray*}		
		  &&\nabla'\partial_zu_s(x',z,t)|_{z=0}\\  
		  &=&\begin{cases}
		  -2\int_{\R^{d-1}}\int_0^{\infty} \Gamma_{d-1}(x'-\tilde{x'},t-s)\partial_z\Gamma_1(\tilde z,t-s)\nabla'f(\tilde {x'},\tilde{z},s)d\tilde{x'}d\tilde z\,,\\
		  -2\int_{\R^{d-1}}\int_0^{\infty}\nabla'\Gamma_{d-1}(x'-\tilde{x'},t-s) \partial_z\Gamma_1(\tilde z,t-s)\nabla'(-\Delta')^{-1}\nabla'f(\tilde{x'},\tilde{z},s) d\tilde{x'}d\tilde z\,.
		  \end{cases}
		  \end{eqnarray*}          
		  Taking the horizontal average we get, on the one hand
		  \begin{eqnarray*}
		    &&\langle|\nabla'\partial_zu_s(\cdot,z,t)|_{z=0}|\rangle'\\
		    &\lesssim& \int_0^{\infty}\langle|\Gamma_{d-1}(\cdot,t-s)|\rangle'|\partial_z \Gamma_1(\tilde z,t-s)|\langle|\nabla'f(\cdot,\tilde z,s)|\rangle' d\tilde z\\
		    &\stackrel{(\ref{z0})}{\lesssim}&\int_0^{\infty}|\partial_z \Gamma_1(\tilde z,t-s)|\langle|\nabla' f(\cdot,\tilde z,s)|\rangle'd\tilde z\\
		    &\stackrel{(\ref{BAND2})}{\lesssim}&\frac{1}{R}\int_0^{\infty}|\partial_z \Gamma_1(\tilde z,t-s)|\langle|f(\cdot,\tilde z,s)|\rangle'd\tilde z\\
		    &\lesssim&\frac{1}{R}\sup_{\tilde z}|\tilde z\partial_z\Gamma_1(\tilde z,t-s)|\int_0^{\infty}\langle|f(\cdot,\tilde{z},s)|\rangle'\frac{d\tilde z}{\tilde z}
		  \end{eqnarray*}          
		  and on the other hand        
		  \begin{eqnarray*}
		    &&\langle|\nabla'\partial_zu_s(\cdot,z,t)|_{z=0}|\rangle'\\
		    &\lesssim& \int_0^{\infty}\langle|(\nabla')^2\Gamma_{d-1}(\cdot,t-s)|\rangle'|\partial_z \Gamma_1(\tilde z,t-s)|\langle|(-\Delta')^{-1}\nabla'f(\cdot,\tilde z,s)|\rangle' d\tilde z\\
		    &\stackrel{(\ref{z0})}{\lesssim}&\frac{1}{(t-s)}\int_0^{\infty}|\partial_z \Gamma_1(\tilde z,t-s)|\langle|(-\Delta')^{-1}\nabla' f(\cdot,\tilde z,s)|\rangle'd\tilde z\\
		    &\stackrel{(\ref{BAND1})}{\lesssim}&\frac{R}{(t-s)}\int_0^{\infty}|\partial_z \Gamma_1(\tilde z,t-s)|\langle|f(\cdot,\tilde z,s)|\rangle'd\tilde z\\
		    &\lesssim&\frac{R}{(t-s)}\sup_{\tilde z}|\tilde z\partial_z\Gamma_1(\tilde z,t-s)|\int_0^{\infty}\langle|f(\cdot,\tilde{z},s)|\rangle'\frac{d\tilde z}{\tilde z}\,.
		  \end{eqnarray*}                 
		Using the estimate (\ref{y2}) we get
		\begin{equation*}
		 \langle|\nabla'\partial_zu_s(x',z,t)|_{z=0}|\rangle' \lesssim \begin{cases}
		                                                                \frac{1}{(t-s)^{1/2}R}\int_0^{\infty}\langle|f(\cdot,\tilde z,s)|\rangle'\frac{d\tilde z}{\tilde z}\,,\\
		                                                                \frac{R}{(t-s)^{3/2}}\int_0^{\infty}\langle|f(\cdot,\tilde z,s)|\rangle'\frac{d\tilde z}{\tilde z}\,.
		                                                               \end{cases}
		                                                               \end{equation*} 		                                                                     
               Finally,  inserting into Duhamel's formula and integrating in time we have 
               \begin{eqnarray*}
               &&\int_0^{\infty}\langle|\nabla'\partial_z u(\cdot,z,t)|_{z=0}\rangle' dt \\
               &\stackrel{(\ref{Duhamel1})}{\lesssim}&\int_0^{\infty}\int_0^t\langle|\nabla'\partial_z u_s(\cdot,z,t)|_{z=0}\rangle' ds dt\\
               &\lesssim& \int_0^{\infty}\int_s^{\infty}\min\{\frac{1}{R(t-s)^{\frac{1}{2}}},\frac{R}{(t-s)^{\frac{3}{2}}}\}\int_0^{\infty}\langle|f(\cdot,\tilde z,s)|\rangle'\frac{d\tilde z}{\tilde z} dt ds\\
               &\stackrel{(\ref{Es1})\& (\ref{Es2})}{\lesssim}&\int_0^{\infty}\int_0^{\infty}\langle|f(x',z,s)|\rangle'\frac{d\tilde z}{\tilde z} ds.
               \end{eqnarray*}          
                 \underline{Argument for (\ref{A1.3}):}
                 \newline
                  Let $u$ be the solution of problem (\ref{A1}). 
                  We recall that we want to prove               
                  \begin{equation}\label{SUP}
                  \sup_z\int_0^{\infty}\langle|\nabla'^2u(\cdot,z,t)|\rangle'dt\lesssim \sup_z\int_{0}^{\infty}\langle|f(\cdot,z,t)|\rangle'dt \,.
                  \end{equation}   		   		  
                  The solution of equation (\ref{Du-eq}) extended to the whole space  can be represented by (\ref{representation})
                  (see argument for (\ref{A1.1})). Therefore
                  applying $\nabla'^2$ to (\ref{representation}) and considering the horizontal average we have, on the one hand 
                \begin{eqnarray*}
 		 &&\langle|\nabla'^2u_s(\cdot,z,t)|\rangle'\\
 		&\lesssim& \int_{\R}\langle|\nabla'\Gamma_{d-1}(\cdot,t-s)|\rangle'|\Gamma_1(z-\tilde z,t-s)|\langle|\nabla'f(\cdot,\tilde{z},s)|\rangle' d\tilde z\\
                &\stackrel{(\ref{z0})\& (\ref{BAND2})}{\lesssim}& \int_{\R}\frac{1}{(t-s)^{\frac{1}{2}}}|\Gamma_1(z-\tilde z,t-s)|\frac{1}{R}\langle|f(\cdot,\tilde{z},s)|\rangle' d\tilde z
                \end{eqnarray*}                
                and on the other hand 
                 \begin{eqnarray*}
 		 &&\langle|\nabla'^2u_s(\cdot,z,t)|\rangle'\\
 		 &\lesssim&\int_{\R}\langle|\nabla'^3\Gamma_{d-1}(\cdot,t-s)|\rangle'|\Gamma_1(z-\tilde z,t-s)|\langle|(-\Delta')^{-1}\nabla'f(\cdot,\tilde{z},s)|\rangle' d\tilde z\,\\ 
 		 &\stackrel{(\ref{z0}) \& (\ref{BAND1})}{\lesssim} &\int_{\R}\frac{1}{(t-s)^{\frac{3}{2}}}|\Gamma_1(z-\tilde z,t-s)|R\langle|f(\cdot,\tilde{z},s)|\rangle' d\tilde z\,.
		 \end{eqnarray*}                 
                  Inserting the above estimates in the Duhamel's formula (\ref{Duhamel1}), we have                 
                 {\footnotesize{\begin{eqnarray*}
                 &&\int_0^{\infty}\int_0^t \langle|\nabla'^2u_s(z,\cdot)|\rangle'dsdt\\
                 &\lesssim& \int_0^{\infty}\int_s^{\infty}\min\left\{\frac{1}{R(t-s)^{\frac{1}{2}}},\frac{R}{(t-s)^{\frac{3}{2}}}\right\}\int_{\R}|\Gamma_1(z-\tilde{z},t-s)|\langle|f(\cdot,\tilde{z},s)|\rangle'd\tilde z dsdt\\
                 &\lesssim&\int_{\R}\left(\int_0^{\infty}\min\left\{\frac{1}{R \tau^{\frac{1}{2}}},\frac{R}{\tau^{\frac{3}{2}}}\right\}|\Gamma_1(z-\tilde{z},\tau)|d\tau\right)\int_0^{\infty}\langle|f(\cdot,\tilde{z},s)|\rangle'dsd\tilde z\\
		 &\lesssim& \sup_{\tilde{z}}\int_0^{\infty}\langle|f(\cdot,\tilde{z},s)|\rangle'ds\int_{\R}\int_0^{\infty}\min\left\{\frac{1}{R \tau^{\frac{1}{2}}},\frac{R}{\tau^{\frac{3}{2}}}\right\}|\Gamma_1(z-\tilde{z},\tau)|d\tau d\tilde z\\
		 &\stackrel{(\ref{x1})}{\lesssim}&\sup_{\tilde{z}}\int_0^{\infty}\langle|f(\cdot,\tilde{z},s)|\rangle'ds\int_0^{\infty}\min\left\{\frac{1}{R \tau^{\frac{1}{2}}},\frac{R}{\tau^{\frac{3}{2}}}\right\}d\tau\int_{\R}|\Gamma_1(z-\tilde{z},\tau)|d\tilde z\\
		 &\stackrel{(\ref{MIN})}{\lesssim}&\sup_{\tilde{z}}\int_0^{\infty}\langle|f(\cdot,\tilde{z},s)|\rangle'ds\,.
                 \end{eqnarray*}}}
                 Taking the supremum in $z$ we obtain the desired estimate.          
                 \newline       
                  \underline{Argument for (\ref{A1.4}):}
                  \newline
                  Let $u$ be the solution of problem (\ref{A1}). 
                   We claim               
                   \begin{equation}\label{SUP2}
                   \sup_z\int_0^{\infty}\langle|\nabla'\partial_zu|\rangle'dt\lesssim \sup_z\int_{0}^{\infty}\langle|f|\rangle'dt \,.
                   \end{equation}   		 
		  The solution of the equation (\ref{Du-eq}) extended to the whole space  can be represented by (see argument for (\ref{A1.1}))      
		 \begin{align*}
		  u_{s}(x',z,t)&=\int_{\R}\Gamma(\cdot,z-\tilde z, t-s)\ast_{x'}f(\cdot,\tilde z,s)d\tilde z\,.
		 \end{align*}          
		 Applying $\nabla'\partial_z$ and considering the horizontal average we obtain, on the one hand 		 
                 \begin{eqnarray*}
 		 &&\langle|\nabla'\partial_z u_s(\cdot,z,t)|\rangle'\\ 		 
 		 &\lesssim&\int_{\R}\langle|\Gamma_{d-1}(\cdot,t-s)|\rangle'|\partial_z\Gamma_1(z-\tilde z,t-s)|\langle|\nabla'f(\cdot,\tilde{z},s)|\rangle' d\tilde z\\
 		 &\stackrel{(\ref{BAND2})}{\lesssim}&\int_{\R}|\partial_z\Gamma_1(z-\tilde z,t-s)|\frac{1}{R}\langle|f(\cdot,\tilde{z},s)|\rangle' d\tilde z\\
 		 \end{eqnarray*}
 		 and, on the other hand 
 		 \begin{eqnarray*}
 		 &&\langle|\nabla'\partial_z u_s(\cdot,z,t)|\rangle'\\ 		 
 		 &\lesssim&\int_{\R}\langle|\nabla'^2\Gamma_{d-1}(\cdot,t-s)|\rangle'|\partial_z\Gamma_1(z-\tilde z,t-s)|\langle|(-\Delta')^{-1}\nabla'f(\cdot,\tilde{z},s)|\rangle' d\tilde z\\  
 		 &\stackrel{(\ref{BAND1})}{\lesssim}&\int_{\R}\frac{1}{(t-s)}|\partial_z\Gamma_1(z-\tilde z,t-s)|R\langle|f(\cdot,\tilde{z},s)|\rangle' d\tilde z\,.\\  
 		 \end{eqnarray*}		 
                 Inserting the above estimates in the Duhamel's formula (\ref{Duhamel1}), we have                 
                 {\footnotesize{\begin{eqnarray*}
                 &&\int_0^{\infty}\int_0^t \langle|\nabla'\partial_z u_s(z,\cdot)|\rangle'dsdt\\
                 &\lesssim& \int_0^{\infty}\int_s^{\infty}\min\left\{\frac{1}{R},\frac{R}{(t-s)}\right\}\int_{\R}|\partial_z\Gamma_1(z-\tilde{z},t-s)|\langle|f(\cdot,\tilde{z},s)|\rangle'd\tilde z dtds\\
                 &\lesssim&\int_{\R}\left(\int_0^{\infty}\min\left\{\frac{1}{R},\frac{R}{\tau}\right\}|\partial_z\Gamma_1(z-\tilde{z},\tau)|d\tau\right)\int_0^{\infty}\langle|f(\cdot,\tilde{z},s)|\rangle'dsd\tilde z\\
		 &\lesssim& \sup_{\tilde{z}}\int_0^{\infty}\langle|f(\cdot,\tilde{z},s)|\rangle'ds\int_{\R}\int_0^{\infty}\min\left\{\frac{1}{R },\frac{R}{\tau}\right\}|\partial_z\Gamma_1(z-\tilde{z},\tau)|d\tau d\tilde z\\
		 &\stackrel{(\ref{x1})}{\lesssim}&\sup_{\tilde{z}}\int_0^{\infty}\langle|f(\cdot,\tilde{z},s)|\rangle'ds\int_0^{\infty}\min\left\{\frac{1}{R \tau^{\frac{1}{2}}},\frac{R}{\tau^{\frac{3}{2}}}\right\}d\tau\\
		 &\stackrel{(\ref{MIN})}{\lesssim}&\sup_{\tilde{z}}\int_0^{\infty}\langle|f(\cdot,\tilde{z},s)|\rangle'ds\,.
                 \end{eqnarray*}}}
                 Taking the supremum in $z$ we obtain the desired estimate. 
                 \newline           
		\underline{Argument for (\ref{A2.1})}
                 \newline
		 We recall that we want to show                
                   $$\int_0^{\infty}\int_0^{\infty}\langle|\nabla'\partial_zu_N|\rangle'\frac{dz}{z}dt\lesssim \int_{0}^{\infty}\int_0^{\infty}\langle|f|\rangle'\frac{dz}{z}dt ,$$    
		   where $u_N$ be the solution to the non-homogeneous heat equation with Neumann boundary conditions (\ref{A2}).
		   By the Duhamel's principle we have     
		    $$u_N(x',z,t)=\int_{s=0}^{t} u_{N_s}(x',z,t)ds, $$    
		    where  $u_{N_s}$ is solution to            
	          \begin{equation*}
                  \left\{\begin{array}{rclc}
		  (\partial_t-\Delta)u_{N_s}&=&0 \qquad & {\rm for } \quad  z>0, t>s\,,\\
		  \partial_zu_{N_s}&=&0  \qquad & {\rm for } \quad z=0, t>s\,,\\
		  u_{N_s}&=&f  \qquad & {\rm for } \quad z>0, t=s\,,\\
		  \end{array}\right.        
                  \end{equation*}                    
                  is the solution of problem (\ref{A1}).        
		 Extending this equation to the whole space by even reflection
		 \footnote{With abuse of notation we will denote with $u_{N_s}$ and $f$ their even reflection}, we are left to study the problem        
		 \begin{equation*}
                  \left\{\begin{array}{rclc}
		  (\partial_t-\Delta)u_{N_s}&=&0 \qquad & {\rm for } \quad z\in \R, t>s\,,\\
		  u_{N_s}&=&f \qquad & {\rm for } \quad  t=s\,,\\
		  \end{array}\right.        
                 \end{equation*}        
		the solution of which can be  represented via heat kernel as       
		\begin{align*}
		  u_{N_s}(x',z,t)&=\int_{\R}\Gamma(\cdot,z-\tilde z, t-s)\ast_{x'}f(\cdot,\tilde z,s)d\tilde z\\
		  &=\int_{0}^{\infty}\left[\Gamma(\cdot,\tilde z+z,t-s)+\Gamma (\cdot,\tilde z-z,t-s)\right]\ast_{x'}f(\cdot,\tilde z,s)d\tilde z\,.
		\end{align*}          		 
		 Applying $\nabla'\partial_z$ to the representation above
 		 \begin{eqnarray*}
                  &&\nabla'\partial_zu_{N_{s}}(x',z,t)\\
		 &=&\footnotesize{
 		 \begin{cases}
 		     \int_{0}^{\infty}\int_{\R^{d-1}}\Gamma_{d-1}(x'-\tilde{x'},t-s)\left(\partial_z\Gamma_1(\tilde z+z,t-s)-\partial_z\Gamma_1 ( \tilde z-z,t-s)\right)\nabla'f(\tilde{x'},\tilde z,s)d\tilde{x'}d\tilde z\,,\\
 		     \int_{0}^{\infty}\int_{\R^{d-1}}\nabla'^2\Gamma_{d-1}(x'-\tilde{x'},t-s)\left(\partial_z\Gamma_1(\tilde z+z,t-s)-\partial_z\Gamma_1 (\tilde z-z,t-s)\right)(-\Delta')^{-1}\nabla'f(\tilde{x'},\tilde z,s)d\tilde{x'}d\tilde z\,
 		    \end{cases}}
 		 \end{eqnarray*}    		
 		and averaging in the horizontal direction we obtain, on the one hand
%
                {\footnotesize{  
		\begin{eqnarray*}
                 &&\langle|\nabla'\partial_zu_{N_s}(\cdot,z,t)|\rangle'\\
                 &\lesssim&\int_{0}^{\infty}\langle|\Gamma_{d-1}(\cdot,t-s)|\rangle'|\partial_z\Gamma_1(\tilde z+z,t-s)-\partial_z\Gamma_1 (\tilde z-z,t-s)|\langle|\nabla'f(\cdot,\tilde z,s)|\rangle' d\tilde z\\
                 &\stackrel{(\ref{z0})\&(\ref{BAND2})}{\lesssim}&\frac{1}{R}\int_{0}^{\infty}|\partial_z\Gamma_1(\tilde z+z,t-s)-\partial_z\Gamma_1 (\tilde z-z,t-s)|\langle|f(\cdot,\tilde z,s)|\rangle' d\tilde z
 		 \end{eqnarray*}    }}
 		 and, on the other hand 
		  {\footnotesize{
 		\begin{eqnarray*}
                 &&\langle|\nabla'\partial_zu_{N_s}(\cdot,z,t)|\rangle'\\
 		 &\lesssim& \int_{0}^{\infty}\langle|\nabla'^2\Gamma_{d-1}(\cdot,t-s)|\rangle'|\partial_z\Gamma_1(\tilde z+z,t-s)-\partial_z\Gamma_1(\tilde z-z,t-s)|\langle|(-\Delta')^{-1}\nabla'f(\cdot,\tilde z,s)\rangle'd\tilde z\\
 		 &\stackrel{(\ref{z0})\&(\ref{BAND1})}{\lesssim}&\frac{R}{(t-s)} \int_{0}^{\infty}|\partial_z\Gamma_1(\tilde z+z,t-s)-\partial_z\Gamma_1(\tilde z-z,t-s)|\langle|f(\cdot,\tilde z,s)|\rangle'd\tilde z\,.
 		 \end{eqnarray*}  }}
		 Multiplying by the weight $\frac{1}{z}$ and integrating in $z\in(0,\infty)$ we get
		 \begin{equation*}
		\int_0^{\infty}\langle|\nabla'\partial_zu_{N_s}(\cdot,z,t)|\rangle'\frac{dz}{z}\lesssim \sup_{\tilde z}\int_0^{\infty}K_{t-s}(z,\tilde z) dz
		\begin{cases}
		\frac{1}{R}\int_{0}^{\infty}\langle|f(\cdot,\tilde z,s)|\rangle' \frac{d\tilde z}{\tilde z}\,,\\
		\frac{1}{(t-s)} R\int_{0}^{\infty}\langle |f(\cdot,\tilde z,s)|\rangle'\frac{d\tilde z}{\tilde z}\,,
		\end{cases}
		\end{equation*}    
		where we called $K_{t-s}(z,\tilde z)=\frac{\tilde z}{z}|\partial_z\Gamma_1(\tilde z-z,t-s)-\partial_z\Gamma_1( z+\tilde z,t-s)|$.
		\newline
		 Recalling 
		 $$\sup_{\tilde z}\int_0^{\infty}K_{t-s}(z,\tilde z) dz\stackrel{(\ref{EE1})}{\lesssim} \int_{\R}|\partial_z\Gamma_1(z,t-s)|dz+\sup_{z\in \R}(z^2|\partial^2_z\Gamma_1(z,t-s)|)$$
		 and observing that, in this case         
		 $$\int_{\R}|\partial_z \Gamma_1(z,t-s)|dz+\sup_{z\in \R}(z^2|\partial_z\Gamma_1(z,t-s)|)\stackrel{(\ref{x1})\&(\ref{y3})}{\lesssim}\frac{1}{(t-s)^{\frac{1}{2}}} ,$$
		 we can conclude that      		
		\begin{equation*}
		\int_0^{\infty}\langle|\nabla'\partial_zu_{N_s}(\cdot,t)|\rangle'\frac{dz}{z}\lesssim
		 \footnotesize{\begin{cases}
		\frac{1}{(t-s)^{\frac{1}{2}}}\frac{1}{R}\int_{0}^{\infty}\langle|f(\cdot,\tilde z,s)|\rangle' \frac{d\tilde z}{\tilde z}\\
		\frac{1}{(t-s)^{\frac{3}{2}}} R \int_{0}^{\infty}\langle |f(\cdot,\tilde z,s)|\rangle'\frac{d\tilde z}{\tilde z}.
		\end{cases}}
		\end{equation*}    
		Finally,  inserting (\ref{Duhamel1}) and integrating in time we have 
               \begin{eqnarray*}
               &&\int_0^{\infty}\int_0^{\infty}\langle|\nabla'\partial_z u_{N_s}(\cdot,z,t)|\rangle'\frac{dz}{z} dt \\
               &\stackrel{(\ref{Duhamel1})}{\lesssim}&\int_0^{\infty}\int_0^{\infty}\int_0^t\langle|\nabla'\partial_z u_{N_s}(\cdot,\tilde{z},t)|\rangle' \frac{dz}{z} ds dt\\
               &\lesssim&\int_s^{\infty} \int_0^{\infty}\min\{\frac{1}{R(t-s)^{\frac{1}{2}}},\frac{R}{(t-s)^{\frac{3}{2}}}\}\int_0^{\infty}\langle|f(\cdot,\tilde{z},s)|\rangle'\frac{d\tilde z}{\tilde z} ds dt\\
               &\stackrel{(\ref{Es1})\&(\ref{Es2})}{\lesssim}&\int_0^{\infty}\int_0^{\infty}\langle|f(\cdot,\tilde{z},s)|\rangle'\frac{d\tilde z}{\tilde z} ds\,.
               \end{eqnarray*}
		  \underline{Argument for (\ref{A3.1}):}
                   \newline
		  Recall that we need to prove 
		  $$\sup_z\int_0^{\infty}|\nabla'\partial_zu_C|dt\lesssim \langle|\nabla'\partial_zu|_{z=0}\rangle'\,.$$
                  By  equation (\ref{A3}), the even extension $\overline{u_C}$ satisfies                  		          
		  \begin{equation}\label{BO}
		   (\partial_t-\Delta)\overline{u_C}=-[\partial_z\overline{u_C}]\delta_{z=0}=-2\partial_z u_C\delta_{z=0}=-2\partial_z u|_{z=0}\delta_{z=0}
		  \end{equation}		          
		  and therefore we study the following problem on the whole space
                 \begin{equation}\label{A3b}
                  \left\{\begin{array}{rclc}
		   (\partial_t-\Delta)\overline{u_C}&=&-2\partial_z u|_{z=0}\delta \qquad & {\rm for } \quad z\in \R, t>0\,,\\
		  \overline{u_C}&=&0\qquad & {\rm for } \quad  t=0\,.\\
		  \end{array}\right.        
                 \end{equation}                  
                   By Duhamel's principle 
                   \begin{equation}\label{DU2}\overline{u_{C}}(x',z,t)=\int_{s=0}^{t} \overline{u_{C_s}}(x',z,t)ds ,\end{equation}
                  where $\overline{u_{C_s}}$ solves the initial value problem                       
	          \begin{equation}\label{A3bb}
                  \left\{\begin{array}{rclc}
		  (\partial_t-\Delta)\overline{u_{C_s}}&=&0 \qquad & {\rm for } \quad  z\in \R, t>s\,,\\
		  \overline{u_{C_s}}&=&-2\partial_z u|_{z=0}\delta  \qquad & {\rm for } \quad z\in \R, t=s\,.\\
		  \end{array}\right.        
                  \end{equation}                                    
		 The solution of problem (\ref{A3bb}) can be represented via the heat kernel as         
		  \begin{eqnarray*}
		   \overline{u_{C_s}}(x',z, t)&=&\int \Gamma(z-\tilde z,t-s)\ast_{x'}(-2\partial_z u|_{z=0}\delta)(\tilde z,s) d\tilde z,\\
		    &=&-2\Gamma(z,t-s)\ast_{x'}\partial_z u(z, s)|_{z=0}\,.
		  \end{eqnarray*}   
		  We apply $\nabla'\partial_z$ to the representation above
		 \begin{equation*}
                 \nabla'\partial_z\overline{u_{C_s}}(x',z,t)=
                  \int_{\R^{d-1}}-2\Gamma_{d-1}(x'-\tilde{x'},t-s)\partial_z\Gamma_1(z,t-s)\nabla'\partial_z u(\cdot,z, s)|_{z=0}d\tilde{x'}
		 \end{equation*}    		   
		 and then average in the horizontal direction, 		  
		 \begin{eqnarray*}
                 &&\langle|\nabla'\partial_z\overline{u_{C_s}}(x',z,t)|\rangle'\\
                 &\lesssim&\langle|\Gamma_{d-1}(x',t-s)|\rangle'|\partial_z\Gamma_1(z,t-s)|\langle|\nabla'\partial_z u(\cdot,z, s)|_{z=0}|\rangle'\\
                 &\stackrel{(\ref{z0})}{\lesssim}&  |\partial_z\Gamma_1(z,t-s)|\langle|\nabla'\partial_z u(\tilde{x'},z, s)|_{z=0}|\rangle'\,.
		 \end{eqnarray*}    
	         Inserting the previous estimate in the Duhamel formula \ref{DU2} and integrating in time we get
	         \begin{eqnarray}
	         &&\int_0^{\infty}\langle|\nabla'\partial_z\overline{u_{C}}(x',z,t)|\rangle'dt\notag\\
	         &\leq& \int_0^{\infty}\int_0^t\langle|\nabla'\partial_z\overline{u_{C_s}}(x',z,t)|\rangle'dsdt\notag\\
	         &\lesssim& \int_0^{\infty}\int_s^{\infty}|\partial_z\Gamma_1(z,t-s)|dt\langle|\nabla'\partial_z u(\tilde{x'},z, s)|_{z=0}|\rangle' ds\notag\\
	         &\stackrel{(\ref{y1})}{\lesssim}&  \int_0^{\infty}\langle|\nabla'\partial_z u(\tilde{x'},z, s)|_{z=0}|\rangle' ds\,.\label{here}
	          \end{eqnarray} 
	          The estimate (\ref{A3.1}) follows immediately after passing to the supremum in (\ref{here}).
	          \end{proof}	 
          
\section{Appendix}\label{App}

\subsection{Preliminaries}
We start this section by proving some elementary bounds and equivalences, coming directly from the 
definition of horizontal bandedness (\ref{BANDLIM2}). These will turn  to be crucial in the proof of the
main result.
     \begin{lemma}\label{Bandedness}\ \\
       \begin{enumerate}
         \item[a)]   If 
                     \begin{equation}\label{B1}
                     \F r(k',z,t)=0 \quad  {\rm{ unless }} \quad R|k'|\geq 4\,
                     \end{equation} then 
                     \begin{equation}\label{BAND1}
                     \langle|r(\cdot,z,t)| \rangle'\leq R\langle|\nabla' r(\cdot,z,t)|\rangle'\,.
                     \end{equation}

                    In particular
                    $$||r||_{(0,\infty)}\leq R ||\nabla' r||_{(0,\infty)}\,.$$
        
          \item[b)] If    
                    \begin{equation}\label{B2}
                    \F r(k',z,t)=0 \quad  {\rm{ unless }} \quad R|k'|\leq 1\,
                    \end{equation} then 
                    \begin{equation}\label{BAND2}
                    \langle|\nabla'r(\cdot,z,t)| \rangle'\leq \frac{1}{R}\langle| r(\cdot,z,t)|\rangle'\,.
                    \end{equation}
                    In particular
                    $$||\nabla'r||_{(0,\infty)}\leq \frac{1}{R} ||r||_{(0,\infty)}\,.$$

         \item[c)]  If    
                    $$\F r(k',z,t)=0 \quad  {\rm{ unless }} \quad 1\leq R|k'|\leq 4 $$ then
                    \begin{equation}\label{P}
                    ||\nabla'(-\Delta')^{-\frac{1}{2}}r||_{(0,\infty)}\sim ||r||_{(0,\infty)}\,,
                    \end{equation}
                    and 
                   \begin{equation}\label{Q}
                   ||(-\Delta')^{\frac{1}{2}}r||_{(0,\infty)}\sim||\nabla' r||_{(0,\infty)}\,.
                   \end{equation}        
        \end{enumerate}
       \end{lemma}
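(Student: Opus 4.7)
The three statements are all horizontal Fourier-multiplier bounds, so my approach is uniform: introduce a smooth frequency cutoff $\chi$ in $k'$ that equals the identity on the horizontal spectrum of $r$, express the operator of interest as convolution with a kernel $K_R$ on $\mathbb{R}^{d-1}$ whose symbol incorporates this cutoff, reduce by the rescaling $\xi=Rk'$ to an $R$-independent master kernel $\check m$, and conclude by Young's inequality on $L^1_{x'}(\mathbb{T}^{d-1})$, using that periodization does not increase the $L^1$ norm. The pointwise-in-$(z,t)$ statement $\langle|\cdot|\rangle'\le\cdots$ then upgrades to the $\|\cdot\|_{(0,\infty)}$ statement by applying it to each piece of the defining decomposition of the norm.

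For part (a), I take $\chi\in C^\infty(\mathbb{R}^{d-1})$ vanishing on $\{|\xi|\le 2\}$ and equal to $1$ on $\{|\xi|\ge 4\}$, so that $\chi(Rk')\equiv 1$ on the spectrum of $r$, and write $r=\sum_{j}K_{R,j}\ast_{x'}\partial_j r$ with $\widehat{K_{R,j}}(k')=-ik'_j\chi(Rk')/|k'|^2$. The scaling $\xi=Rk'$ yields $K_{R,j}(x')=R^{-(d-2)}\check m_j(x'/R)$, whence $\|K_{R,j}\|_{L^1(\mathbb{R}^{d-1})}=R\,\|\check m_j\|_{L^1}$, and Young's inequality gives $\langle|r|\rangle'\lesssim R\langle|\nabla' r|\rangle'$. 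Part (b) is symmetric: choosing $\chi\in C_c^\infty$ with $\chi\equiv 1$ on $\{|\xi|\le 1\}$ makes the symbol of $\nabla'$ on the spectrum of $r$ a rescaled Schwartz multiplier, and scaling gives a kernel of $L^1$-norm $\lesssim R^{-1}$. For part (c) the spectrum sits in the annulus $\{1\le R|k'|\le 4\}$, so I use a \emph{compactly supported} $\chi\in C_c^\infty$ adapted to the shell; the symbols of $\nabla'(-\Delta')^{-1/2}$ and its inverse then become rescaled Schwartz multipliers with $L^1$ kernels of order one independent of $R$, giving both equivalences (\ref{P}) and (\ref{Q}).

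The main obstacle is part (a): because the cutoff $\chi$ there cannot be compactly supported, the master symbol $m_j(\xi)=-i\xi_j\chi(\xi)/|\xi|^2$ is only a H\"ormander--Mikhlin symbol of order $-1$, so $\check m_j\in L^1(\mathbb{R}^{d-1})$ is not automatic. I would obtain it by splitting $m_j=m_j\phi+m_j(1-\phi)$ with $\phi\in C_c^\infty$ a bump supported near the origin: the compactly supported piece $m_j\phi$ has Schwartz inverse Fourier transform by Paley--Wiener, while on the complement $m_j(1-\phi)$ is smooth and decays like $|\xi|^{-1}$, so repeated integration by parts in $\xi$ yields polynomial decay of its inverse Fourier transform, enough for $L^1$-integrability in $d-1\ge 2$. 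Once this master estimate is secured, the rest is the routine scaling/periodization/Young chain described above, and the passage from the horizontal-pointwise inequalities to the $\|\cdot\|_{(0,\infty)}$-inequalities is immediate from the definition of the interpolation norm.
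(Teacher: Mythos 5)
Your approach to part~(a) is genuinely different from the paper's. The paper avoids Fourier multiplier analysis entirely there: choosing a Schwartz $\phi$ with $\int\phi=1$ and $\F\phi$ supported where $\F r$ vanishes, it establishes the pointwise identity $(\mathrm{Id}-\phi\ast')r=\psi\ast'\nabla' r$ by a fundamental-theorem-of-calculus computation, with the explicit kernel $\psi(x')=\int_0^1\phi(-x'/t)\,x'\,t^{-d}\,dt$, whose $L^1$ norm is read off directly as $\|\psi\|_{L^1}\le\int|\xi||\phi(\xi)|\,d\xi$; Young's inequality then finishes. For (b) and (c) your compactly-supported-rescaled-multiplier argument coincides with what the paper does (the paper only writes out (a) and (b) and leaves (c) implicit). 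So the place where your route diverges is exactly (a), and it is there that your argument has a gap.

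You want $\check m_j\in L^1(\R^{d-1})$ for $m_j(\xi)=-i\xi_j\chi(\xi)/|\xi|^2$, with $\chi$ vanishing on $\{|\xi|\le 2\}$ and equal to $1$ for $|\xi|\ge 4$. Your split $m_j=m_j\phi+m_j(1-\phi)$ with $\phi\in C_c^\infty$ \emph{a bump near the origin} is vacuous: $m_j$ already vanishes identically for $|\xi|\le 2$, so $m_j\phi$ contributes nothing (or is trivially Schwartz) and $m_j(1-\phi)$ retains the same slow $|\xi|^{-1}$ tail. Worse, the integration-by-parts estimate $|x'|^{|\alpha|}|\check m_j(x')|\le c\,\|\partial^\alpha m_j\|_{L^1}$ requires $|\alpha|\ge d-1$ for the right-hand side to converge, and hence yields $|\check m_j(x')|\lesssim|x'|^{-N}$ only for $N\ge d-1$. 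This controls $\check m_j$ for $|x'|\ge 1$ but gives nothing integrable near the origin: the bound $|x'|^{-(d-1)}$ is already non-integrable on $\{|x'|\le 1\}\subset\R^{d-1}$. The true behavior near $x'=0$ is the Riesz-potential singularity $|x'|^{-(d-2)}$, which \emph{is} integrable in $\R^{d-1}$, but integration by parts alone does not see it. The standard repair is a dyadic decomposition $m_j=\sum_{k\ge 0}m_j\psi_k$ with $\psi_k$ supported in $\{2^k\le|\xi|\le 2^{k+2}\}$: each rescaled piece $n_k(\eta)=(m_j\psi_k)(2^k\eta)$ lives in a fixed annulus with all Schwartz seminorms $\lesssim 2^{-k}$, giving $\|\widehat{m_j\psi_k}\|_{L^1(\R^{d-1})}=\|\check n_k\|_{L^1(\R^{d-1})}\lesssim 2^{-k}$, summable in $k$. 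As written your proof of (a) does not close; with this repair, or by switching to the paper's elementary FTC identity, it does.
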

                  
      \begin{remark}
       All the results stated in Lemma \ref{Bandedness} are valid with the norm $||\cdot||_{(0,\infty)}$ replaced with $||\cdot||_{(0,1)}$.
      \end{remark}
      \begin{remark}
      Notice that from (\ref{P}) and (\ref{Q}), it follows
      \begin{equation}\label{R}
      ||\nabla'(-\Delta')^{-1}\nabla'\cdot r||_{(0,\infty)}\lesssim ||r||_{(0,\infty)}\,.
      \end{equation}  
      \end{remark}

\begin{proof}\ \\

  \begin{enumerate}
    \item[a)] By rescaling we may assume $R=1$.
 
	      Let $\phi\in\ES (\R^{d-1})$ be a Schwartz function such that 

	      $$\F \phi(k')=\begin{cases}
			      0 & \mbox{ for }  |k'|\geq 1\\
			      1 & \mbox{ for } |k'|\leq 1
			     \end{cases}$$					
	      and such that $\int_{\R^{d-1}} \phi(x')dx'=1.$

	      We claim that, under assumption (\ref{B1}), there exists $\psi \in L^1(\R^{d-1})$ such that 
	      \begin{equation}\label{O}
		(\rm{Id}-\phi\ast')r= \psi\ast'\nabla r\,.
	      \end{equation}
	      Since $r=r-\phi\ast r$, if we assume (\ref{O}) the conclusion follows from Young's inequality
	      $$ \int_{\R^{d-1}}|r(x',z)|dx'\leq \int_{\R^{d-1}}|\psi(x')|dx'\int_{\R^{d-1}}|\nabla r(x',z)|dx'\,.$$
	      \underline{Argument for (\ref{O}):}
	      \newline
	      Using the assumptions on $\phi$ and performing suitable change of variables, we find
	      \begin{eqnarray*}
	      &&r(x',z)-\int \phi(x'-y')r(y',z) dy'\\
	                    &=&\int\phi(x'-y')(r(x',z)-r(y',z)) dy'\\
                           &=&\int_{\R^{d-1}}\phi(x'-y')\int_0^1 (x'-y')\nabla' r(tx'+(t-1)(x'-y'), z) dy'dt\\
                           &=&\int_0^1\int_{\R^{d-1}}\phi(\xi)\nabla'r(x'+(t-1)\xi,z)\cdot \xi d\xi dt\\
                           &=&\int_0^1\int_{\R^{d-1}}\phi\left(\frac{\hat y'-x'}{t}\right)\nabla r(\hat y',z)\cdot \frac{\hat y'-x'}{t} dt \frac{1}{t^{d-1}}d\hat y'\\
                           &=&\int_{\R^{d-1}}\nabla' r(\hat y',z)\cdot \left(\int_0^1\phi\left(\frac{\hat y'-x'}{t}\right)\frac{\hat y'-x'}{t^{d}} dt\right) d\hat y'\\
                           &=&\int_{\R^{d-1}}\nabla' r(\hat y',z)\psi\left(\frac{\hat y'-x'}{t}\right)d\hat y',
	     \end{eqnarray*}
	     where 
	    $$\psi(x')=\int_0^1\phi\left(\frac{-x'}{t}\right)\frac{x'}{t^{d}}dt\,.$$
	    We notice that $\psi\in L^1(\R^{d-1})$, in fact
	    $$\int_{\R^{d-1}}|\psi(x')|dx'\leq \int_0^1 \int_{\R^{d-1}}\left|\phi(x'/t)\frac{x'}{t^{d}}\right| dx' dt=\int_{\R^{d-1}}|\phi(\xi)\xi|d\xi\,.$$
  
  \item[b)]
	  In Fourier space we have 
	  $$\F\nabla'r(k',z)=ik'\F r(k',z)=R^{-1}\F G(Rk')\F r(k',z)=R^{-1}\F G_R(k')\F r(k',z),$$
	  where $G$ is a Schwartz function and $G_{R}(x')=R^{-d}\F G(x'/R)$. Since 
	  $\int |G_R|dx'=\int |G| dx'$ is independent of $R$, we may conclude by Young
	  $$\int |\nabla' r| dx'\leq \frac{1}{R}\int |G_R| dx'\int |r| dx'\lesssim \frac{1}{R}\int |r| dx'\,. $$
	  
  \end{enumerate}

 \end{proof}

       
Here we prove an elementary estimate that will be applied in the argument for (\ref{A1.1}) and (\ref{A2.1}), Lemma \ref{lemma3}
\begin{lemma}\label{Lemma5}\ \\
Let $K=K(z)$ be a real function and define
 $$\overline{K}(z,\tilde z)=\frac{\tilde z}{z}|K(\tilde z- z)-K( z+\tilde z)|\,.$$
Then
\begin{equation}\label{EE1}
\sup_{\tilde z}\int_0^{\infty}\overline{K}(z,\tilde z) dz\lesssim \int_{\R}|K(z)|dz+\sup_{z\in \R}(z^2|\partial_zK(z)|)\,.
\end{equation}
 \end{lemma}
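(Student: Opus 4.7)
The natural plan is to split the domain of integration based on the relative size of $z$ and $\tilde z$. Concretely, I would write
\[
\int_0^\infty \overline{K}(z,\tilde z)\,dz = \int_0^{\tilde z/2} \frac{\tilde z}{z}\bigl|K(\tilde z-z)-K(\tilde z+z)\bigr|\,dz + \int_{\tilde z/2}^\infty \frac{\tilde z}{z}\bigl|K(\tilde z-z)-K(\tilde z+z)\bigr|\,dz
\]
and estimate each piece with a different tool; call $M:=\sup_{z\in\mathbb{R}}(z^2|\partial_z K(z)|)$.

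For the region $z\in(0,\tilde z/2)$, the idea is that the difference $K(\tilde z-z)-K(\tilde z+z)$ vanishes at $z=0$, which should absorb the singular factor $\tilde z/z$. By the fundamental theorem of calculus
\[
\bigl|K(\tilde z-z)-K(\tilde z+z)\bigr| \leq \int_{\tilde z-z}^{\tilde z+z}|\partial_s K(s)|\,ds \leq 2z\sup_{s\in[\tilde z-z,\tilde z+z]}|\partial_s K(s)|.
\]
Since $z\leq\tilde z/2$, every $s$ in that interval satisfies $s\geq\tilde z/2$, so the derivative is controlled by $M/s^2\lesssim M/\tilde z^{\,2}$. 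Plugging this in, the factor of $z$ cancels against $1/z$, and after multiplication by $\tilde z$ the integrand is $O(M/\tilde z)$, giving a contribution $\lesssim M$ upon integrating over an interval of length $\tilde z/2$.

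For the region $z\geq\tilde z/2$ the weight $\tilde z/z$ is bounded by $2$, so the oscillation-cancellation between the two translates of $K$ is no longer needed. I would simply apply the triangle inequality and change variables:
\[
\int_{\tilde z/2}^\infty |K(\tilde z-z)|\,dz = \int_{-\infty}^{\tilde z/2}|K(w)|\,dw,\qquad \int_{\tilde z/2}^\infty |K(\tilde z+z)|\,dz = \int_{3\tilde z/2}^\infty |K(w)|\,dw,
\]
each bounded by $\|K\|_{L^1(\mathbb{R})}$. Adding the two regional bounds yields $\lesssim M+\|K\|_{L^1(\mathbb{R})}$, which is the desired estimate, uniformly in $\tilde z$.

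The only subtle point is selecting the correct split-point so that on the ``inner'' side the arguments $\tilde z\pm z$ stay comparable to $\tilde z$ (forcing $s\gtrsim\tilde z$ for the derivative bound to be useful), while on the ``outer'' side the weight $\tilde z/z$ is already $O(1)$; the choice $z=\tilde z/2$ does both simultaneously. Once that split is made, the argument is routine and I do not expect any further obstacle.
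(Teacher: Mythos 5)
Your proposal is correct and follows essentially the same approach as the paper: split at $z=\tilde z/2$, use $\|K\|_{L^1}$ with the triangle inequality on the outer region where $\tilde z/z\lesssim 1$, and use the fundamental theorem of calculus together with the decay $|K'(s)|\leq M/s^2$ on the inner region where $s=\tilde z\pm z\gtrsim\tilde z$. Your inner-region bound (take the sup of $|K'|$ over $[\tilde z-z,\tilde z+z]$ pointwise) is a somewhat more direct route to the same estimate than the paper's Fubini-and-rescaling argument, but there is no substantive difference in strategy; note only that the argument implicitly assumes $\tilde z>0$, which is the case relevant to the kernel $K_{t-s}(z,\tilde z)$ in the application.
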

 
 \begin{proof}
Let us distinguish two regions:  $\frac{1}{2}\left|\frac{\tilde{z}}{z}\right|<1$ and $\frac{1}{2}\left|\frac{\tilde{z}}{z}\right|>1$. 
\newline
For $|z|\geq \frac{1}{2}|\tilde z|$ we have
\begin{eqnarray*}
&&\sup_{\tilde z}\int_{|z|\geq \frac{1}{2}|\tilde z|}|\overline{K}(z,\tilde z)|dz\\
&\leq &\max_{\tilde z}\int_{|z|\geq \frac{1}{2}|\tilde z|}|K(\tilde z-z)-K(z+\tilde z)|dz\lesssim \int|K(z)|dz\,.\\
\end{eqnarray*}
While for the region $|z|\leq \frac{1}{2}|\tilde z|$ we have,
\begin{eqnarray*}
&&\max_{\tilde z}|\tilde z|\int_{|z|\leq\frac{1}{2}|\tilde z|}\frac{1}{|z|}|K(\tilde z-z)-K(z+\tilde z)|dz\\
&=&\max_{\tilde z}|\tilde z|\int_{|z|\leq\frac{1}{2}|\tilde z|}\frac{1}{|z|}\left|\int_{-1}^1K'(\tilde z+t z) zdt\right|dz\\
&\leq&\max_{\tilde z}|\tilde z|\int_{-1}^1\frac{1}{t}\int_{|z|\leq\frac{t}{2}|\tilde z|}|K'(\tilde z+ z) |dzdt\\
&\stackrel{\frac{1}{2}|\tilde z|\leq |\tilde z+z|}{\leq}& \max_{\tilde z}\int_{-1}^1\frac{1}{t}\int_{|z|\leq\frac{t}{2}|\tilde z|}2|\tilde z+z||K'(\tilde z+ z)|dt dz\\
&\leq&\max_{\tilde z}\int_{-1}^1\frac{2}{t}\max_{|z|\leq\frac{t}{2}|\tilde z|}\left\{|\tilde z+z||K'(\tilde z+ z)|\right\}\left(\int_{|z|\leq\frac{t}{2}|\tilde z|} dz\right)dt\\
&=&\max_{\tilde z}\int_{-1}^1\frac{1}{t}\max_{|z|\leq\frac{t}{2}|\tilde z|}\{|\tilde z+z||K'(\tilde z+ z)|\} t|\tilde z|dt\\
&=&2\max_{\tilde z}|\tilde z|\max_{|z|\leq\frac{t}{2}|\tilde z|}\{|\tilde z+z||K'(\tilde z+ z)|\}\\
&\stackrel{\frac{1}{2}|\tilde z|\leq |\tilde z+z|}{\leq}&4\max_{\tilde z}\max_{|z|\leq\frac{t}{2}|\tilde z|}\{| z+\tilde z|^2|K'(\tilde z+ z)|\}\,.\\ 
\end{eqnarray*}
In conclusion we have 
\[\max_{ z}\int|\bar K(z,\tilde z)|dz\lesssim\int|K(z)|dz+\max_{ z}|z|^2|K'(z)| \,.\]
\end{proof}

\subsection{Heat kernel: elementary estimates}
In this section we recall the definition of the heat kernel and some 
properties and estimates that we will use throughout the paper.
\newline
The function $\Gamma:\R^{d}\times \R\rightarrow \R$ is defined as
$$\Gamma(x,t)=\frac{1}{t^{d/2}}\exp\left(-\frac{|x|^2}{4t}\right)$$
and we can rewrite it as 
$$\Gamma(x,t)=\Gamma_1(z,t)\Gamma_{d-1}(x',t) \qquad x'\in \R^{d-1}, z\in \R,$$
 where $$\Gamma_1(z,t)=\frac{1}{t^{1/2}}\exp\left(-\frac{z^2}{4t}\right)$$ and 
 $$\Gamma_{d-1}(z,t)=\frac{1}{t^{(d-1)/2}}\exp\left(-\frac{|x'|^2}{4t}\right)\,.$$ 
Here we list the bounds on the derivatives of $\Gamma$ that are used in Section \ref{App}, Lemma \ref{lemma3}: 
\begin{enumerate}
  \item  \begin{equation}\label{z0}
         \langle|(\nabla')^n\Gamma_{d-1}|\rangle'\approx \frac{1}{t^{\frac{n}{2}}}\,.
         \end{equation}
 \item  \begin{equation}\label{x1}
        \int_{\R}|\partial_z^{n}\Gamma_1|dz\lesssim \frac{1}{t^{\frac{n}{2}}}\,.
        \end{equation}
\item   \begin{equation}\label{y1}
        \int_{0}^{\infty}|\partial_z\Gamma_{1}(z,t)|dt=\int_0^{\infty}\left|\frac{1}{\hat{t}^{3/2}}\exp{\left(-\frac{1}{4\hat{t}}\right)}\right|d\tilde t\lesssim 1\,,
        \end{equation}
        where we have used the change of variable $\hat{t}=\frac{t}{z^2}$.
\item   \begin{equation}\label{y2}
        \sup_{z\in \R}\left(z|\partial_z\Gamma_1(z,t)|\right)=\sup_{\xi}\left|\frac{1}{t^{\frac{1}{2}}}\xi^{2}\exp^{-\xi^2}\right|\lesssim \frac{1}{t^{\frac{1}{2}}}\,,
        \end{equation}
        where we have used the change of variable $\xi=\frac{z}{t^{\frac{1}{2}}}$\,.
\item   \begin{equation}\label{y3}
        \sup_{z\in \R}\left(z^2|\partial_z\Gamma_1(z,t)|\right)=\sup_{\xi}\left|\xi^{3}\exp^{-\xi^2}\right|\lesssim 1\,,
        \end{equation}
       where we have used the change of variable $\xi=\frac{z}{t^{\frac{1}{2}}}$\,.
\end{enumerate}

\section{Notations}\label{notations}

\underline{The $(d-1)-$dimensional torus:}
\vspace{0.1cm}\newline
We denote with $[0,L)^{d-1}$  the $(d-1)-$dimensional torus of lateral size $L$.
\newline
\underline{The spatial vector:}

$$x=(x',z)\in [0,L)^{d-1}\times \R \,.$$

\underline{The horizontal average:}

\begin{equation*}
   \langle\cdot\rangle'=\frac{1}{L^{d-1}}\int_{[0,L)^{d-1}} \;\cdot\;\;\; dx'\,.
\end{equation*}
\underline{Long-time and horizontal average:}
\begin{equation}\label{LTaHA}
  \langle\cdot\rangle= \limsup_{t_0\rightarrow \infty}\frac{1}{t_0}\int_0^{t_0}\langle \;\cdot \;\rangle'  dt\,.
\end{equation}
\underline{Convolution in the horizontal direction:}

\begin{equation*}
 f\ast_{x'}g(x')=\int_{[0,L)^{d-1}}f(x'-\widetilde{x'})g(\widetilde{x'})d\widetilde{x'}\;.
\end{equation*}
\underline{Convolution in the whole space:}

\begin{equation*}
 f\ast g(x)=\int_{\R}\int_{[0,L)^{d-1}}f(x'-\widetilde{x'},z-\tilde{z})g(\widetilde{x'},\tilde{z})d\widetilde{x'}d\tilde{z}\,.
\end{equation*}

\underline{Horizontal Fourier transform:}

$$\mathcal{F'}f(k',z,t)=\frac{1}{L^{d-1}}\int e^{-ik'\cdot x'}f(x',z,t)dx'\,.$$
where $k'$ is the conjugate  variable of $x'$.

\underline{Horizontally band-limited function:}
\newline
A function $g=g(x',z,t)$ is called {\it horizontally  band-limited} with bandwidth $R$ if it satisfies 
the {\it bandedness assumption }
\begin{equation}\label{BANDLIM2}
 \F g(k',z,t)=0 \mbox{ unless } 1\leq R|k'|\leq 4 \mbox{ where } R<R_0.
\end{equation}
\underline{Interpolation norms:}
$$||f||_{(0,1)}=||f||_{R;(0,1)}=\inf_{f=f_1+f_2}\left\{\left\langle\sup_{z\in(0,1)}|f_1|\right\rangle+\left\langle\int_{(0,1)}|f_2|\frac{dz}{z(1-z)}\right\rangle\right\}\,,$$
$$||f||_{(0,\infty)}=||f||_{R;(0,\infty)}=\inf_{f=f_1+f_2}\left\{\left\langle\sup_{z\in(0,\infty)}|f_1|\right\rangle+\left\langle\int_{(0,\infty)}|f_2|\frac{dz}{z}\right\rangle\right\}\,,$$
$$||f||_{(-\infty,1)}=||f||_{R;(-\infty,1)}=\inf_{f=f_1+f_2}\left\{\left\langle\sup_{z\in(-\infty,1)}|f_1|\right\rangle+\left\langle\int_{(-\infty,1)}|f_2|\frac{dz}{1-z}\right\rangle\right\}\,.$$
where $f_0, f_1$ satisfy the bandedness assumption (\ref{BANDLIM2}).

\vspace{0.3cm}
Throughout the paper we will denote with $\lesssim$ the inequality up to universal constants.

\section*{Acknowledgement}
C.N. was supported by IMPRS of MPI MIS (Leipzig). A.C. was partially supported by Whittaker Research Fellowship.



\bibliographystyle{unsrt}
\bibliography{Biblio}

\end{document}